\documentclass[a4, 12pt]{amsart}
\usepackage{amssymb}
\usepackage{amstext}
\usepackage{amsmath}
\usepackage{amscd}
\usepackage{amsthm}
\usepackage{amsfonts}
\usepackage{enumerate}
\usepackage{graphicx}
\usepackage{latexsym}

\theoremstyle{plain}
\newtheorem{thm}{Theorem}[section]
\newtheorem*{thm*}{Theorem}
\newtheorem*{cor*}{Corollary}
\newtheorem*{prop*}{Proposition}

\newtheorem{prop}[thm]{Proposition}
\newtheorem{lem}[thm]{Lemma}

\newtheorem{cor}[thm]{Corollary}

\newtheorem*{claim*}{Claim}

\theoremstyle{definition}
\newtheorem{defn}[thm]{Definition}
\newtheorem{ex}[thm]{Example}
\newtheorem{rem}[thm]{Remark}

\newtheorem*{conj*}{Conjecture}

\newtheorem{note}[thm]{Notation}

\theoremstyle{remark}

\newtheorem*{tpf}{{\it Proof of Theorem \ref{main}}}

\numberwithin{equation}{thm}
\def\Hom{\operatorname{Hom}}
\def\lhom{\operatorname{\underline{Hom}}}

\def\Ext{\operatorname{Ext}}
\def\Tor{\operatorname{Tor}}

\def\mod{\operatorname{mod}}

\def\G{{\mathcal G}}

\def\Coker{\operatorname{Coker}}
\def\Ker{\operatorname{Ker}}

\def\tr{\operatorname{Tr}}

\def\rank{\operatorname{rank}}

\def\m{\mathfrak m}

\def\P{\mathfrak P}

\def\depth{\operatorname{depth}}

\def\pd{\operatorname{pd}}

\def\grade{\operatorname{grade}}

\def\A{{\mathcal A}}
\def\B{{\mathcal B}}
\def\C{{\mathcal C}}

\def\F{{\mathcal F}}
\def\X{{\mathcal X}}

\def\xx{\text{\boldmath $x$}}

\tolerance=9999

\setlength{\oddsidemargin}{0mm}
\setlength{\evensidemargin}{0mm}
\setlength{\topmargin}{0mm}
\setlength{\textwidth}{160mm}
\setlength{\textheight}{230mm}

\begin{document}

\setlength{\baselineskip}{15pt}

\title[Contravariantly finite resolving subcategories]{Contravariantly finite resolving subcategories over commutative rings}
\author{Ryo Takahashi}
\address{Department of Mathematical Sciences, Faculty of Science, Shinshu University, 3-1-1 Asahi, Matsumoto, Nagano 390-8621, Japan}
\email{takahasi@math.shinshu-u.ac.jp}
\begin{abstract}
Contravariantly finite resolving subcategories of the category of finitely generated modules have been playing an important role in the representation theory of algebras.
In this paper we study contravariantly finite resolving subcategories over commutative rings.
The main purpose of this paper is to classify contravariantly finite resolving subcategories over a henselian Gorenstein local ring; in fact there exist only three ones.
Our method to obtain this classification also recovers as a by-product the theorem of Christensen, Piepmeyer, Striuli and Takahashi concerning the relationship between the contravariant finiteness of the full subcategory of totally reflexive modules and the Gorenstein property of the base ring.
\end{abstract}
\maketitle
\section{Introduction}

Tilting theory has been a central topic in the representation theory of algebras.
This theory was originally introduced in the study of module categories over finite-dimensional algebras, and is now an important notion in a lot of areas of mathematics, including finite and algebraic group theory, commutative and noncommutative algebraic geometry, and algebraic topology; see \cite{hand} for the details.
Cohen-Macaulay approximation theory due to Auslander and Buchweitz \cite{ABu} is an aspect of tilting theory for commutative algebra.
Dualizing modules, which play a critical role in Cohen-Macaulay approximation theory, are special types of cotilting module.

The notion of a contravariantly finite subcategory (of the category of finitely generated modules), which is also called a (pre)covering class, was first introduced over artin algebras by Auslander and Smal{\o} \cite{AS} in connection with studying the problem of which subcategories admit almost split sequences.
The notion of a resolving subcategory was introduced by Auslander and Bridger \cite{ABr} in the study of totally reflexive modules, which are also called modules of Gorenstein dimension zero or finitely generated Gorenstein projective modules.
There is an application of contravariantly finite resolving subcategories to the study of the finitistic dimension conjecture \cite{AR}.
For other details of contravariantly finite subcategories and resolving subcategories, see \cite{aus0,AS2,AR1,AR2,ASo,Marz,KS,MR,Belig,Reiten} in addition to the articles introduced above.
On the other hand, there are a lot of articles which study contravariantly finite subcategories of the category of all modules; see \cite{Enochs,EJ,ATT,BEE} for example.

It was found out by Auslander and Reiten \cite{AR} that the notion of a contravariantly finite resolving subcategory is closely related to tilting theory; they classified contravariantly finite resolving subcategories over an artin algebra of finite global dimension in terms of cotilting modules.

\begin{thm}[Auslander-Reiten]\label{ausrei}
Let $R$ be an artin algebra of finite global dimension.
Then the assignment $T\mapsto{}^\perp T$ makes a bijection from the set of isomorphism classes of basic cotilting $R$-modules to the set of contravariantly finite resolving subcategories of $\mod R$.
\end{thm}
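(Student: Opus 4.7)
The plan is to exhibit $T\mapsto{}^\perp T$ as a bijection by constructing an inverse assignment $\X\mapsto T_\X$ and checking well-definedness on both sides. Well-definedness of $T\mapsto{}^\perp T$ is mostly formal: the subcategory ${}^\perp T=\{X:\Ext^{\ge 1}_R(X,T)=0\}$ is closed under extensions, direct summands, and kernels of epimorphisms by standard long exact sequence arguments, and it contains every projective since $\id_R T<\infty$; hence it is resolving. Its contravariant finiteness comes from the Auslander--Buchweitz approximation theorem applied to the cotilting pair $(\add T,{}^\perp T)$, which supplies for every $M\in\mod R$ a short exact sequence $0\to Y\to X\to M\to 0$ in which $X\in{}^\perp T$, $Y$ has finite $\add T$-coresolution dimension, and $X\to M$ is a right ${}^\perp T$-approximation. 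Injectivity is quick: $\add T$ is recovered from ${}^\perp T$ as the subcategory of Ext-injective objects $\{Z\in{}^\perp T:\Ext^{\ge 1}_R(X,Z)=0\text{ for all }X\in{}^\perp T\}$, and Krull--Schmidt then recovers basic $T$.

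The bulk of the work is surjectivity. Given a contravariantly finite resolving subcategory $\X$, I build $T_\X$ as follows. Because $\X\supseteq\add R$, every right $\X$-approximation is surjective, so Wakamatsu's Lemma produces for each $M$ a short exact sequence $0\to Y\to X\to M\to 0$ with $Y\in\X^\perp$. Iterating this yields an $\X$-resolution of $M$, and finiteness of the global dimension of $R$ forces it to be finite (any projective resolution of $M$ already lies in $\X$). Set $\omega:=\X\cap\X^\perp$, the Ext-injective objects of $\X$. Applying the iterated construction to the injective cogenerator $D(R)$, or equivalently to the terms of a finite injective coresolution of $R$, one verifies that after at most $\operatorname{gl.dim}R$ steps the syzygies land in $\omega$. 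Krull--Schmidt and the closure properties of $\omega$ then allow me to collect representatives of all indecomposable summands appearing this way into a single basic module $T=T_\X$ with $\add T=\omega$.

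It remains to check that $T$ is cotilting and that $\X={}^\perp T$. The vanishing $\Ext^{\ge 1}_R(T,T)=0$ holds because $T\in\X$ and $T\in\X^\perp$; the inequality $\id_R T<\infty$ is inherited from finite global dimension of $R$; and the required finite $\add T$-coresolution (of $DR$, or of the injective cogenerator) falls out of the construction of $\omega$ above. The inclusion $\X\subseteq{}^\perp T$ is immediate from $T\in\X^\perp$. For the reverse inclusion, an $M\in{}^\perp T$ admits a finite right $\X$-resolution whose syzygies lie in $\X^\perp$; those syzygies also lie in ${}^\perp T$ by closure under kernels of epimorphisms, hence in $\X^\perp\cap{}^\perp T=\add T\subseteq\X$, and the resolving property then forces $M\in\X$.

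The principal obstacle is the step from the abstract contravariant finiteness hypothesis to the concrete module $T_\X$: one must combine Wakamatsu's Lemma with finite global dimension to ensure termination of iterated approximation sequences in $\omega$, and then package all Ext-injective indecomposables into a single basic module. Finite global dimension is essential here, since without it the produced $T$ need not have finite injective dimension and the bijection breaks down; this is precisely the obstruction that must be handled differently in the commutative Gorenstein setting pursued in the remainder of the paper.
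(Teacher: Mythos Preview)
The paper does not prove Theorem~\ref{ausrei}; it is quoted in the introduction as a known result of Auslander and Reiten \cite{AR} to motivate the commutative-ring analogue pursued later. So there is no ``paper's own proof'' to compare against.

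Your sketch follows the standard Auslander--Reiten/Auslander--Buchweitz route and is structurally sound: well-definedness and injectivity are handled correctly, and the surjectivity argument via Wakamatsu's lemma, $\omega=\X\cap\X^\perp$, and finite global dimension is the right strategy. Two steps deserve more care. First, the claim that iterated approximation of $D(R)$ terminates in $\omega$ after $\operatorname{gl.dim}R$ steps relies on the fact that $\X$-resolution dimension is well defined for a resolving subcategory (a Schanuel-type comparison with the projective resolution), which you should state explicitly. Second, and more substantively, you assert $\add T=\omega$ after collecting the indecomposable summands that appear in the $\X$-resolution of $D(R)$, but you only justify $\add T\subseteq\omega$. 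You still need to show that every indecomposable object of $\omega$ actually occurs; one way is to prove that any $W\in\omega$ admits a finite $\add T$-coresolution (via the coresolution of $W$ through injectives, each of which lies in $\add D(R)$), and then use $W\in{}^\perp T$ to split it. Without this, your $T$ might a priori be too small to satisfy ${}^\perp T=\X$.
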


\noindent
Here, $\mod R$ denotes the category of finitely generated $R$-modules, and ${}^\perp T$ the full subcategory of $\mod R$ consisting of all finitely generated modules $M$ with $\Ext_R^i(M,T)=0$ for $i>0$.

This paper deals with contravariantly finite resolving subcategories over commutative rings.
The main result of this paper is the following classification theorem.

\begin{thm}\label{goren}
Let $R$ be a commutative henselian (e.g. complete) Gorenstein local ring.
Then all the contravariantly finite resolving subcategories of $\mod R$ are:
\begin{itemize}
\item
the full subcategory of free modules,
\item
the full subcategory of maximal Cohen-Macaulay modules, and
\item
$\mod R$ itself.
\end{itemize}
\end{thm}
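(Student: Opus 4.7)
The plan is first to verify the three listed subcategories really are contravariantly finite and resolving, and then to show that any contravariantly finite resolving subcategory $\X$ of $\mod R$ must coincide with one of them. The easy direction is standard: $\add R$ is trivially resolving and is contravariantly finite because over the henselian local ring $R$ every finitely generated module admits a projective cover, which serves as a right $\add R$-approximation; $\CM R$ is resolving by the depth lemma combined with the Gorenstein hypothesis (which forces syzygies of maximal Cohen-Macaulay modules to remain MCM), and is contravariantly finite by Auslander and Buchweitz's Cohen-Macaulay approximation theorem \cite{ABu}; and $\mod R$ is trivially both.

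For the converse, let $\X$ be an arbitrary contravariantly finite resolving subcategory of $\mod R$; being resolving, $\X$ contains $\add R$. The central device is the minimal right $\X$-approximation of the residue field $k$: by contravariant finiteness together with the henselian Krull-Schmidt structure, there exists a short exact sequence $0 \to Y \to X \to k \to 0$ with $X \in \X$ (surjective because $R \in \X$) and $Y \in \X^\perp$ by Wakamatsu's Lemma (using both minimality and closure of $\X$ under syzygies). I would then establish two dichotomies. \emph{First:} if $\X$ contains any non-MCM module, then $\X = \mod R$. \emph{Second:} if $\X \subseteq \CM R$, then $\X = \add R$ or $\X = \CM R$. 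For the second dichotomy I would descend to the stable category $\underline{\CM R}$, which under the henselian Gorenstein hypotheses is a Krull-Schmidt triangulated category carrying Auslander-Reiten triangles; the contravariant finiteness of $\X$ passes to the induced thick subcategory $\underline{\X}$, and the AR-triangle structure together with Krull-Schmidt should force $\underline{\X}$ to be $0$ or all of $\underline{\CM R}$, giving $\X = \add R$ or $\X = \CM R$ respectively.

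The technical heart is the first dichotomy. Assuming $M \in \X$ has $\depth_R M < d$, local duality over the Gorenstein ring $R$ yields $\Ext_R^{d - \depth M}(M, R) \ne 0$; combining this with $\Ext_R^i(M, Y) = 0$ for all $i > 0$ (from $Y \in \X^\perp$) and an analysis of $Y$ via the depth and Bass numbers read off from its minimal injective resolution, the aim is to force $Y = 0$, whence $k \in \X$. Once $k \in \X$, one bootstraps to $\X = \mod R$ using closure of $\X$ under extensions and syzygies (which captures all finite-length modules) together with applying Wakamatsu's Lemma to the right $\X$-approximation of an arbitrary $M \in \mod R$ and then showing that the kernel, which lies in $\X^\perp$ and has strongly constrained Bass numbers once $k, R \in \X$, is forced to vanish. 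The main obstacle is this Ext-bookkeeping: one must squeeze enough from the lone hypothesis that $\X$ contains a non-MCM module to rule out every intermediate possibility for $Y$, and this is precisely where both the Gorenstein property (providing rigid Ext-duality on MCMs) and the henselian Krull-Schmidt structure (providing minimal approximations with finitely many trackable indecomposable summands) do real work.
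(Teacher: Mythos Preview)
Your scaffolding (Wakamatsu applied to the right $\X$-approximation of $k$) matches the paper, but both dichotomies have real gaps. For the second, Auslander--Reiten triangles in the stable category $\underline{\C}(R)$ exist only when $R$ is an isolated singularity, which the theorem does not assume; even granting them, the assertion that a contravariantly finite thick subcategory must be $0$ or all of $\underline{\C}(R)$ is not a standard fact, and a resolving $\X\subseteq\C(R)$ produces an $\underline{\X}$ closed under $\Omega=[-1]$ but not obviously under $[1]$, so it need not even be thick. For the first dichotomy you yourself flag the argument as unfinished, and the obstruction is genuine: from a single non-MCM $M\in\X$ together with $\Ext_R^{>0}(M,Y)=0$ one cannot force $Y=0$ by Bass-number bookkeeping alone, because nothing yet guarantees $Y$ has finite injective dimension, and without that Ischebeck's formula (Lemma~\ref{pdidsup}) is unavailable.

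The paper's route differs in both the case split and the technical core. It splits on whether $\X$ contains a module of \emph{infinite projective dimension}, not on MCM versus non-MCM. If every $X\in\X$ has $\pd_R X<\infty$, a short Wakamatsu-plus-Ischebeck argument (Proposition~\ref{pd}) gives $\X=\F(R)$. Otherwise, pick $G\in\X$ with $\pd_R G=\infty$; over a Gorenstein ring $\Ext_R^{\gg0}(G,R)=0$ is automatic. The heart is then Theorem~\ref{main}: using the transposes $H_i=\tr\Omega(\Omega^iG)$ and a contravariant cohomological $\delta$-functor argument (Propositions~\ref{key} and~\ref{keyprop}), one shows that \emph{every} module in $\X^\perp$ has finite injective dimension. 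Ischebeck's formula then forces $\depth X'=\depth R$ for all nonzero $X'\in\X$, giving $\X\subseteq\C(R)$; and since every maximal Cohen--Macaulay module lies in ${}^\perp(\X^\perp)=\X$ (Lemma~\ref{prpprp}), one obtains $\X=\C(R)$. Thus the missing idea in your sketch is precisely this finite-injective-dimension statement for $\X^\perp$, which does the work of both of your dichotomies simultaneously and requires neither AR theory nor an isolated-singularity hypothesis.
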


The assumption of finiteness of global dimension in Theorem \ref{ausrei} is essential to obtain the bijection in the theorem.
Without this assumption, in general there exist so many contravariantly finite resolving subcategories, and it is extremely difficult to classify them; see \cite{MR}.
Theorem \ref{goren} should be remarkable since in it the ring $R$ is not assumed to have finite global dimension.

By the way, over an arbitrary commutative noetherian local ring, all the three subcategories in Theorem \ref{goren} are resolving, and the first and third subcategories are contravariantly finite.
The second subcategory is contravariantly finite if the base ring is a Cohen-Macaulay local ring admitting a dualizing module.
This fact is known as the Cohen-Macaulay approximation theorem due to Auslander and Buchweitz \cite{ABu}.

From now on, we explain our main results more minutely.
Throughout the rest of this section, let $R$ be a commutative noetherian henselian local ring.
We will give a sufficient condition for a given resolving subcategory to be a test for injective dimension:

\begin{thm}\label{mainlem}
Let $\X$ be a resolving subcategory of $\mod R$ such that the residue field of $R$ has a right $\X$-approximation.
Assume that there exists an $R$-module $G\in\X$ of infinite projective dimension with $\Ext_R^i(G,R)=0$ for $i\gg 0$.
Let $M$ be an $R$-module such that each $X\in\X$ satisfies $\Ext_R^i(X,M)=0$ for $i\gg 0$.
Then $M$ has finite injective dimension.
\end{thm}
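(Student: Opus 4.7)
The plan is to reduce the conclusion to the Ext-vanishing $\Ext_R^i(k,M)=0$ for $i\gg 0$, and then to establish that vanishing using the $\X$-approximation of $k$ together with the module $G$. The reduction is classical: by a theorem of Bass combined with the proof of Bass's conjecture (Roberts, after Peskine--Szpiro), a finitely generated module $M$ over a noetherian local ring has $\id_RM<\infty$ if and only if $\Ext_R^i(k,M)=0$ for $i\gg 0$. So the heart of the proof is to verify this vanishing.

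Using that $R$ is henselian, I would take a right minimal $\X$-approximation $\pi:X_0\to k$; since $R\in\X$ this map is surjective, producing $0\to Y_0\to X_0\to k\to 0$ with $Y_0:=\Ker\pi$. Wakamatsu's Lemma gives $\Ext_R^1(\X,Y_0)=0$. Because $\X$ is resolving and therefore closed under syzygies, substituting $\syz^{i-1}X'\in\X$ for each $X'\in\X$ and dimension-shifting upgrades this to $\Ext_R^i(\X,Y_0)=0$ for all $i\geq 1$, so $Y_0\in\X^\perp$. Applying $\Hom_R(-,M)$ to the short exact sequence and using that $\Ext_R^i(X_0,M)=0$ for $i\gg 0$, the long exact sequence yields the dimension shift $\Ext_R^{i+1}(k,M)\cong\Ext_R^i(Y_0,M)$ for $i\gg 0$. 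Thus it remains to show $\Ext_R^i(Y_0,M)=0$ for $i\gg 0$.

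This last step is the main obstacle, and it is where the module $G$ enters decisively. Choose $t$ large enough that $\Ext_R^i(\syz^tG,R)=\Ext_R^{i+t}(G,R)=0$ for all $i>0$, and set $N:=\syz^tG$. Then $N\in\X$ (by resolvingness), $N$ is totally reflexive, and $\pd_RN=\infty$ since $\pd_RG=\infty$. The inclusion $Y_0\in\X^\perp$ specializes to $\Ext_R^i(N,Y_0)=0$ for all $i\geq 1$. The delicate point is to exploit this combination --- a nonfree totally reflexive module $N$ with $\Ext_R^i(N,Y_0)=0$ for $i\geq 1$ --- to conclude $\pd_RY_0<\infty$; I expect this rigidity argument to proceed via a complete resolution of $N$ or via Tate cohomology, possibly using that $R$ is henselian to invoke Krull--Schmidt cancellation. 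Granting $\pd_RY_0<\infty$, we obtain $\Ext_R^i(Y_0,M)=0$ for $i>\pd_RY_0$, and the dimension shift then yields $\Ext_R^i(k,M)=0$ for $i\gg 0$, completing the argument.
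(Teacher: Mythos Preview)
Your opening moves match the paper: reduce to $\Ext_R^{\gg 0}(k,M)=0$, build $0\to Y_0\to X_0\to k\to 0$ with $Y_0\in\X^\perp$ via a minimal right $\X$-approximation and Wakamatsu's lemma, and dimension-shift to $\Ext_R^{\gg 0}(Y_0,M)=0$. The argument then has two genuine gaps. First, $N=\syz^tG$ is not known to be totally reflexive: the hypothesis yields only $\Ext_R^{>0}(N,R)=0$, not $\Ext_R^{>0}(N^\ast,R)=0$ or reflexivity of $N$; modules satisfying the one-sided condition but failing total reflexivity over non-Gorenstein rings exist (Jorgensen--\c{S}ega), so complete resolutions and Tate cohomology for $N$ are simply unavailable. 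Second, the ``rigidity'' you grant---that $\Ext_R^{>0}(N,Y_0)=0$ for a single nonfree $N$ forces $\pd_RY_0<\infty$---is not proved, and you acknowledge as much. Observe that this conclusion would not involve $M$; combined with the theorem itself applied to $M=Y_0\in\X^\perp$, it would give (whenever $k\notin\X$) a nonzero module of simultaneously finite projective and finite injective dimension, forcing $R$ to be Gorenstein by Foxby's theorem. The paper does not claim this strengthening, and establishing it would amount to a result in the orbit of the Auslander--Reiten conjecture, far beyond any Krull--Schmidt cancellation.

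The paper's route is entirely different and never bounds $\pd_RY_0$. It introduces the transposes $H_i=\tr\Omega(\Omega^iG)$, notes that each $H_i$ has no free summand so that the ideal $(H_i)^\ast H_i$ lies in $\m$ and annihilates $k$, and then uses the Auslander--Bridger exact sequence (together with $Y_0\in\X^\perp$ and the one-sided vanishing $\Ext_R^{>0}(\syz^tG,R)=0$, which makes the dualized minimal free resolution of $G$ exact) to produce short exact sequences $0\to(\Omega^{i+2}G)^\ast\otimes_RY_0\to(F_{i+2})^\ast\otimes_RY_0\to(\Omega^{i+3}G)^\ast\otimes_RY_0\to 0$ whose surjection factors through $(F_{i+2})^\ast\otimes_RX_0$, a direct sum of copies of $X_0$. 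Applying the $\delta$-functor $D^j=\Ext_R^j(-,M)$ and using $D^{\gg 0}(X_0)=0$ converts these into a chain of surjections among the modules $D^{a+l}((\Omega^{l+2}G)^\ast\otimes_RY_0)$; Noetherianity forces stabilization, whence $D^{\gg 0}(Y_0)=0$ and finally $D^{\gg 0}(k)=0$. The one-sided Ext vanishing against $R$ is used only to make $(\Omega^\bullet G)^\ast$ an exact sequence, never to manufacture total reflexivity.
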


Using this result, we will prove the following theorem.
Theorem \ref{goren} will be obtained from this.

\begin{thm}\label{cohen}
Let $\X\ne\mod R$ be a contravariantly finite resolving subcategory of $\mod R$.
Suppose that there is an $R$-module $G\in\X$ of infinite projective dimension such that $\Ext_R^i(G,R)=0$ for $i\gg 0$.
Then $R$ is Cohen-Macaulay and $\X$ consists of all maximal Cohen-Macaulay $R$-modules.
\end{thm}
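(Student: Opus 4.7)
The plan is to argue in three stages: establish that $R$ is Cohen-Macaulay with a dualizing module, then show $\CM(R)\subseteq\X$, and finally show $\X\subseteq\CM(R)$.

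Stage 1. Since $\X\ne\mod R$, pick any $M\notin\X$. As $\X$ is contravariantly finite and contains $R$, its right $\X$-approximation of $M$ is surjective, yielding a short exact sequence $0\to K\to X_M\to M\to 0$ with $X_M\in\X$ and $K\ne 0$. Wakamatsu's Lemma gives $\Ext_R^1(X',K)=0$ for every $X'\in\X$; closure of $\X$ under syzygies promotes this by dimension shifting to $\Ext_R^i(X',K)=0$ for all $X'\in\X$ and all $i\geq 1$. Theorem \ref{mainlem} then yields $\id_R K<\infty$. Since $K$ is a nonzero finitely generated module of finite injective dimension, Bass's theorem (Peskine--Szpiro, Hochster, Roberts) forces $R$ to be Cohen-Macaulay, and a classical result of Sharp/Foxby/Reiten then provides a dualizing module $\omega$.

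Stage 2. For $C\in\CM(R)$, take a right $\X$-approximation $0\to K_C\to X_C\to C\to 0$; as in Stage 1, $\id_R K_C<\infty$. Over a Cohen-Macaulay local ring with dualizing module, Auslander-Buchweitz theory gives $\Ext_R^i(C,K_C)=0$ for all $i>0$ (since $C$ is MCM and $K_C$ has finite injective dimension). In particular $\Ext_R^1(C,K_C)=0$, so the sequence splits, and $C$ is a direct summand of $X_C\in\X$. Closure of $\X$ under direct summands yields $C\in\X$.

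Stage 3, the main obstacle. Given $X\in\X$, the Auslander-Buchweitz MCM approximation gives $0\to Y\to C\to X\to 0$ with $C$ MCM and $\id_R Y<\infty$. Stage 2 places $C$ in $\X$, and the resolving property of $\X$ then puts $Y\in\X$. The goal is to deduce $X\in\CM(R)$, equivalently $Y=0$. The hard part is ruling out a nonzero $Y$ that simultaneously lies in $\X$ and has finite injective dimension: the MCM-approximation splitting trick of Stage 2 fails here, because $X$ itself need not be MCM and so $\Ext_R^1(X,Y)$ need not vanish. The plan is to apply Theorem \ref{mainlem} a second time (or invoke a further Wakamatsu-type vanishing argument) to a carefully chosen test module built from $Y$ and $\omega$, exploiting the existence of $G\in\X$ with $\pd G=\infty$ and $\Ext_R^i(G,R)=0$ for $i\gg 0$ to obtain the desired contradiction. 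Isolating this test module and extracting the contradiction is the conceptual heart of the proof, and is the step I expect to be hardest.
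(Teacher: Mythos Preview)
Your Stages 1 and 2 are fine and essentially match the paper's argument (the paper uses the approximation of $k$ specifically, but any $M\notin\X$ works; the paper phrases Stage 2 via $\X={}^\perp(\X^\perp)$, but your splitting argument is the same computation). One quibble: the existence of a dualizing module is not automatic for Cohen--Macaulay henselian local rings, and although it can be deduced here from Foxby's theorem (a nonzero finitely generated module of finite injective dimension forces $\omega$ to exist), you do not actually need it---see below.

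The real issue is Stage 3. You describe it as ``the conceptual heart of the proof'' and propose building a mysterious test module from $Y$ and $\omega$; in fact this step is immediate from what you already produced in Stage 1, and the paper handles it in two lines. You have in hand a \emph{nonzero} module $K\in\X^\perp$ with $\id_R K<\infty$. For any nonzero $X'\in\X$, the definition of $\X^\perp$ gives $\Ext_R^i(X',K)=0$ for all $i>0$; on the other hand Ischebeck's formula (valid because $\id_R K<\infty$) says
\[
\sup\{\,i\mid \Ext_R^i(X',K)\neq 0\,\}=\depth R-\depth_R X'.
\]
Hence $\depth_R X'\ge\depth R=\dim R$, so $X'$ is maximal Cohen--Macaulay. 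No Auslander--Buchweitz approximation, no dualizing module, and no second invocation of Theorem \ref{mainlem} are needed. Your proposed route through the MCM approximation $0\to Y\to C\to X\to 0$ is a detour: the fact that $Y\in\X$ gives you nothing useful, and trying to force $Y=0$ is the wrong target (indeed $Y$ need not be zero; you want $X$ MCM, not the approximation to split).
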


On the other hand, Theorem \ref{mainlem} also yields the following corollary as an immediate consequence.
The assertion of this corollary is a main result of \cite{CPST}.
(Our method to get the corollary is quite different from the proof given in \cite{CPST}.)

\begin{cor}[Christensen-Piepmeyer-Striuli-Takahashi]\label{ccpst}
Suppose that there is a nonfree totally reflexive $R$-module.
If the full subcategory of $\mod R$ consisting of all totally reflexive $R$-modules is contravariantly finite, then $R$ is Gorenstein.
\end{cor}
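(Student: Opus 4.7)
The plan is to deduce this corollary by applying Theorem \ref{mainlem} to the subcategory $\G$ of totally reflexive modules, with the module $M$ taken to be $R$ itself. The point is that totally reflexive modules are precisely the modules $X$ satisfying $\Ext_R^i(X,R)=0$ for all $i>0$ (together with a reflexivity condition), so the subcategory $\G$ has the very useful feature that both the ``test object'' $G$ appearing in Theorem \ref{mainlem} and the ``target module'' $M=R$ can be fed from the same $\Ext$-vanishing condition.

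To carry this out, first I would verify that $\G$ qualifies as an input to Theorem \ref{mainlem}. It is a standard fact (going back to Auslander--Bridger) that $\G$ is a resolving subcategory of $\mod R$. The hypothesis that $\G$ is contravariantly finite gives in particular a right $\G$-approximation of the residue field $k$, which is the first technical requirement of Theorem \ref{mainlem}. Next, to supply the distinguished object $G\in\G$ of infinite projective dimension with $\Ext_R^i(G,R)=0$ for $i\gg 0$, I would take the nonfree totally reflexive module furnished by the hypothesis. The required $\Ext$-vanishing holds for every totally reflexive module by definition, and the infinite projective dimension comes from the well-known fact that a totally reflexive module of finite projective dimension over a local ring is automatically free (indeed, the minimal complete resolution of such a module would have to terminate, forcing it to be projective, hence free by henselianity).

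Finally, I would take $M=R$ and check the remaining hypothesis of Theorem \ref{mainlem}: for every $X\in\G$ one has $\Ext_R^i(X,M)=\Ext_R^i(X,R)=0$ for all $i>0$, again directly from the definition of totally reflexive. Theorem \ref{mainlem} then concludes that $\id_R R<\infty$, which is precisely the statement that $R$ is Gorenstein.

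I do not foresee a genuine obstacle, since the corollary is essentially a specialization where $\G$ is tailor-made to satisfy both the ``there exists $G$'' hypothesis and the ``$\Ext^{\gg 0}$-vanishing against $M$'' hypothesis of Theorem \ref{mainlem} simultaneously. The only small point meriting care is the assertion that the given nonfree totally reflexive module has infinite projective dimension; everything else is a direct verification of hypotheses.
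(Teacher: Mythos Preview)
Your proposal is correct and follows essentially the same route as the paper: apply Theorem \ref{mainlem} to $\X=\G(R)$ with $M=R$, using that $\G(R)$ is resolving, that contravariant finiteness supplies a right $\G(R)$-approximation of $k$, that a nonfree totally reflexive module has infinite projective dimension, and that every totally reflexive $X$ satisfies $\Ext_R^i(X,R)=0$ for $i>0$. One tiny quibble: a finitely generated projective module over a local ring is free without invoking henselianity, so that aside is unnecessary.
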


A totally reflexive module was defined by Auslander \cite{aus1} as a common generalization of a free module and a maximal Cohen-Macaulay module over a Gorenstein local ring.
Auslander and Bridger \cite{ABr} proved that the full subcategory of totally reflexive modules over a left and right noetherian ring is resolving.
Many other properties of totally reflexive modules are stated in \cite{ABr} and \cite{chri}.

If $R$ is Gorenstein, then the totally reflexive $R$-modules are precisely the maximal Cohen-Macaulay $R$-modules, and so the full subcategory of totally reflexive $R$-modules is contravariantly finite by virtue of the Cohen-Macaulay approximation theorem.
Thus, Corollary \ref{ccpst} can be viewed as the converse of this fact.
Corollary \ref{ccpst} implies a geometric result: let $R$ be a homomorphic image of a regular local ring.
Suppose that there is a nonfree totally reflexive $R$-module and are only finitely many nonisomorphic indecomposable totally reflexive $R$-modules.
Then $R$ is an isolated simple hypersurface singularity.
For the details, see \cite{CPST}.

\section*{Conventions}

In the rest of this paper, we assume that all rings are commutative and noetherian, and that all modules are finitely generated.
Unless otherwise specified, let $R$ be a henselian local ring.
The unique maximal ideal of $R$ and the residue field of $R$ are denoted by $\m$ and $k$, respectively.
We denote by $\mod R$ the category of finitely generated $R$-modules.
By a {\em subcategory} of $\mod R$, we always mean a full subcategory of $\mod R$ which is closed under isomorphism.
Namely, in this paper, a subcategory $\X$ of $\mod R$ means a full subcategory such that every $R$-module which is isomorphic to some $R$-module in $\X$ is also in $\X$.

\section{Contravariant finiteness of totally reflexive modules}

In this section, we will state background materials which motivate the main results of this paper.
We start by recalling the definition of a totally reflexive module.

\begin{defn}
We denote by $(-)^\ast$ the $R$-dual functor $\Hom_R(-,R)$.
An $R$-module $M$ is called {\em totally reflexive} (or {\em of Gorenstein dimension zero}) if
\begin{enumerate}[\rm (1)]
\item
the natural homomorphism $M\to M^{\ast\ast}$ is an isomorphism, and
\item
$\Ext_R^i(M,R)=\Ext_R^i(M^\ast,R)=0$ for any $i>0$.
\end{enumerate}
\end{defn}

We introduce three subcategories of $\mod R$ which will often appear throughout this paper.

\begin{note}
We denote by $\F(R)$ the subcategory of $\mod R$ consisting of all free $R$-modules, by $\G(R)$ the subcategory of $\mod R$ consisting of all totally reflexive $R$-modules, and by $\C(R)$ the subcategory of $\mod R$ consisting of all maximal Cohen-Macaulay $R$-modules.
\end{note}

\begin{rem}\label{basic}
The following are basic properties of the subcategories $\F(R)$, $\G(R)$ and $\C(R)$.
\begin{enumerate}[\rm (1)]
\item
Every free $R$-module is totally reflexive, namely, $\F(R)$ is contained in $\G(R)$.
\item
If $R$ is Cohen-Macaulay, then all totally reflexive $R$-modules are maximal Cohen-Macaulay, namely, $\G(R)$ is contained in $\C(R)$.
\item
If $R$ is Gorenstein, then the totally reflexive $R$-modules are precisely the maximal Cohen-Macaulay $R$-modules, namely, $\G(R)$ coincides with $\C(R)$.
\end{enumerate}
The first and third assertions are immediate from the definition of a totally reflexive module.
The second assertion follows from \cite[Theorem (1.4.8)]{chri}.
\end{rem}

Next, we recall the notion of a right approximation over a subcategory of $\mod R$.

\begin{defn}
Let $\X$ be a subcategory of $\mod R$.
\begin{enumerate}[\rm (1)]
\item
Let $\phi:X\to M$ be a homomorphism of $R$-modules with $X\in\X$.
We say that $\phi$ is a {\em right $\X$-approximation} (of $M$) if the induced homomorphism $\Hom_R(X',\phi):\Hom_R(X',X)\to\Hom_R(X',M)$ is surjective for any $X'\in\X$.
\item
We say that $\X$ is {\em contravariantly finite} (in $\mod R$) if every $R$-module has a right $\X$-approximation.
\end{enumerate}
\end{defn}

By definition, contravariant finiteness can be regarded as a weaker version of the property that the inclusion functor has a right adjoint.

\begin{ex}\label{freecov}
Let $R$ be an arbitrary ring and $n$ a nonnegative integer.
The following subcategories of $\mod R$ are contravariantly finite, assuming in addition that $R$ is artinian in (7) and (8).
\begin{enumerate}[\rm (1)]
\item
$\mod R$.
\item
$\F(R)$.
\item
$\C(R)$, provided that $R$ is a Cohen-Macaulay local ring admitting a dualizing module.
\item
A subcategory of $\mod R$ closed under finite direct sums and direct summands in which there are only finitely many isomorphism classes of indecomposable $R$-modules.
\item
The subcategory of $\mod R$ consisting of all finitely generated $S$-modules, where $S$ is a fixed module-finite $R$-algebra.
\item
The subcategory of $\mod R$ consisting of all $M$ satisfying $\Ext_R^1(M,R)=0$.
\item
The subcategory of $\mod R$ consisting of all $M$ such that there exists an exact sequence $0\to M\to P^0\to\cdots\to P^{n-1}$ with each $P^i$ projective.
\item
The subcategory of $\mod R$ consisting of all $M$ such that there exists an exact sequence $I_{n-1}\to\cdots\to I_0\to M\to 0$ with each $I_i$ injective.
\end{enumerate}
The subcategory (1) is contravariantly finite since the identity map of every $R$-module is a right $\mod R$-approximation.
As to the fact that (2) is contravariantly finite, for a given $R$-module $M$, a surjective homomorphism from a module in $\F(R)$ to $M$ (i.e. a free cover of $M$) is exactly a right $\F(R)$-approximation of $M$.
As for the facts that the subcategories (3)-(8) are contravariantly finite, see \cite[Theorem 1.1]{ABu}, \cite[Proposition 4.2]{AS} (or \cite[Proof of Theorem 1.4]{dep0}), \cite[Theorem 1.3.1(1)]{Iyama}, \cite[Proposition (2.21)]{ABr} (or \cite[Theorem 7.1]{AS2} or \cite[Corollary 5.8]{tor}), \cite[Theorem 1.2]{AR3} and \cite[Theorem 1.8]{AR2} , respectively.
See also \cite[Theorems 4.3 and 4.4]{HI}.
\end{ex}

The following result is immediately obtained from Example \ref{freecov}(3) and Remark \ref{basic}(3).

\begin{thm}
If $R$ is Gorenstein, then $\G(R)$ is contravariantly finite.
\end{thm}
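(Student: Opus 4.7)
The plan is essentially to chain together the two facts that the paper has already set up. First I would recall that a Gorenstein local ring is Cohen-Macaulay and admits a dualizing module, namely $R$ itself (this is part of the definition of Gorenstein in any of the standard formulations). Consequently the hypothesis of Example \ref{freecov}(3) is satisfied, so the subcategory $\C(R)$ of maximal Cohen-Macaulay $R$-modules is contravariantly finite; the right $\C(R)$-approximations here are the Cohen-Macaulay approximations constructed by Auslander and Buchweitz \cite{ABu}.

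Next I would invoke Remark \ref{basic}(3), which identifies the subcategory of totally reflexive modules with the subcategory of maximal Cohen-Macaulay modules in the Gorenstein case: $\G(R)=\C(R)$. Combining the two observations, $\G(R)$ inherits contravariant finiteness from $\C(R)$, and the theorem follows. There is really no obstacle to overcome; the content of the statement lies entirely in the Cohen-Macaulay approximation theorem and the characterization of totally reflexive modules over Gorenstein rings, both of which are cited above.
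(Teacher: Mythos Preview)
Your proposal is correct and follows exactly the argument indicated in the paper, which states just before the theorem that it ``is immediately obtained from Example \ref{freecov}(3) and Remark \ref{basic}(3).'' You have simply made explicit the one additional observation needed, that a Gorenstein local ring is Cohen-Macaulay with dualizing module $R$, so that Example \ref{freecov}(3) applies.
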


Recently, it has been proved that the (essential) converse of this theorem holds:

\begin{thm}\cite{CPST}\label{cpst}
Suppose that there is a nonfree totally reflexive $R$-module.
If $\G(R)$ is contravariantly finite, then $R$ is Gorenstein.
\end{thm}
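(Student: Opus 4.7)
The plan is to deduce this theorem as an immediate consequence of Theorem \ref{mainlem}, applied with $\X=\G(R)$ and $M=R$. The whole argument is simply a verification that the hypotheses of Theorem \ref{mainlem} are met.

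First I would record the structural facts about $\G(R)$. By Auslander--Bridger, $\G(R)$ is a resolving subcategory of $\mod R$, so the resolving hypothesis in Theorem \ref{mainlem} is automatic. The assumption that $\G(R)$ is contravariantly finite supplies, in particular, a right $\G(R)$-approximation of the residue field $k$, which is exactly the approximation hypothesis needed.

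Next I would use the nonfree totally reflexive module, call it $G$, provided by hypothesis to play the role of the distinguished module in Theorem \ref{mainlem}. By the very definition of totally reflexive, $\Ext_R^i(G,R)=0$ for \emph{all} $i>0$, so certainly $\Ext_R^i(G,R)=0$ for $i\gg 0$. It remains to check that $\pd_R G=\infty$: if instead $\pd_R G<\infty$, then applying $\Hom_R(-,R)$ to a minimal free resolution of $G$ produces an exact complex whose differentials have entries in $\m$, and minimality forces the resolution to have length zero, i.e., $G$ would be free, contradicting the assumption. Hence $G\in\G(R)$ has infinite projective dimension.

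Finally I would take $M=R$. For every $X\in\G(R)$, total reflexivity gives $\Ext_R^i(X,R)=0$ for all $i>0$, which in particular satisfies the requested vanishing for $i\gg 0$. All hypotheses of Theorem \ref{mainlem} are now in place, so the theorem concludes that $R$ has finite injective dimension over itself, i.e., $R$ is Gorenstein. The ``hard part'' of this derivation is not in the corollary at all but is encapsulated entirely in Theorem \ref{mainlem}; once that result is available, the proof of Theorem \ref{cpst} is a matter of matching definitions with no further obstacle.
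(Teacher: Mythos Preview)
Your proof is correct and is essentially the same as the paper's: the paper derives Theorem \ref{cpst} from Corollary \ref{maincor} (which in turn is just Theorem \ref{main} applied with $M=R$ to $\X=\G(R)$), citing \cite[(1.2.10)]{chri} for the fact that a nonfree totally reflexive module has infinite projective dimension, while you apply Theorem \ref{mainlem} directly and supply the infinite-projective-dimension argument by hand. The only cosmetic difference is that the paper factors the deduction through the intermediate Corollaries \ref{bestcor} and \ref{maincor}.
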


Yoshino \cite{yos1,yos2} and Takahashi \cite{dep0,dep1,dep2,catgp} proved the statement of Theorem \ref{cpst} in several special cases.
Theorem \ref{cpst} is shown in \cite{CPST} by choosing a good maximal $R$-regular sequence $\xx=x_1,\dots,x_t$ and considering the smallest extension closed additive subcategory of $\mod(R/\xx R)$ containing all $R/\xx R$-modules $X/\xx X$ with $X\in\G(R)$.
Theorem \ref{cpst} will be recovered by Theorem \ref{main} stated in the next section, and the proof of Theorem \ref{main} is quite different from that of Theorem \ref{cpst} given in \cite{CPST}.

By the way, the assumption in Theorem \ref{cpst} of existence of a nonfree totally reflexive module is necessary.
Indeed, let $R$ be a non-Gorenstein Golod local ring (e.g. a non-Gorenstein Cohen-Macaulay local ring with minimal multiplicity).
Then $\G(R)$ coincides with $\F(R)$ by \cite[Examples 3.5(2)]{AM}.
In particular, $\G(R)$ is contravariantly finite by Example \ref{freecov}(2).

Theorem \ref{cpst} and Example \ref{freecov}(4) yield the following corollary, which improves \cite[Theorem 1.3]{igp}.

\begin{cor}
Let $R$ be a non-Gorenstein local ring.
If there is a nonfree totally reflexive $R$-module, then there are infinitely many nonisomorphic indecomposable totally reflexive $R$-modules.
\end{cor}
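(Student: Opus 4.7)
The plan is to proceed by contradiction and to deduce contravariant finiteness of $\G(R)$ from the assumed finiteness of the set of indecomposables, so that Theorem \ref{cpst} forces $R$ to be Gorenstein.

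First I would assume, for contradiction, that up to isomorphism there are only finitely many indecomposable totally reflexive $R$-modules. To invoke Example \ref{freecov}(4) I need $\G(R)$ to be closed under finite direct sums and under direct summands. Closure under finite direct sums is immediate from the definition of a totally reflexive module. Closure under direct summands follows from the Auslander--Bridger theorem cited in the text, which says that $\G(R)$ is a resolving subcategory of $\mod R$; resolving subcategories are closed under direct summands by definition.

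With these two closure properties in hand, together with the standing assumption that $R$ is henselian (so that the Krull--Schmidt theorem holds in $\mod R$ and every totally reflexive module has a finite indecomposable decomposition), Example \ref{freecov}(4) applies verbatim and yields that $\G(R)$ is contravariantly finite in $\mod R$. Since by hypothesis there exists a nonfree totally reflexive $R$-module, Theorem \ref{cpst} now gives that $R$ is Gorenstein, contradicting the assumption that $R$ is non-Gorenstein.

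There is essentially no technical obstacle here: everything is a matter of checking hypotheses so that the two already-stated results can be chained together. The only point that could be overlooked is the closure of $\G(R)$ under direct summands, which is why I single it out above; it is supplied free of charge by the Auslander--Bridger result that $\G(R)$ is resolving.
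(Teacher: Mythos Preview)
Your proof is correct and follows precisely the route indicated in the paper: assume finitely many indecomposables, apply Example~\ref{freecov}(4) (using that $\G(R)$ is resolving, hence closed under direct summands and finite direct sums) to get contravariant finiteness, and then invoke Theorem~\ref{cpst} for the contradiction. The paper states this corollary as an immediate consequence of exactly these two results, so there is nothing to add.
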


Combining this with \cite[Theorems (8.15) and (8.10)]{yos0} (cf. \cite[Satz 1.2]{herzog} and \cite[Theorem B]{BGS}), one obtains the following result.

\begin{cor}\label{cpst2}
Let $R$ be a homomorphic image of a regular local ring.
Suppose that there is a nonfree totally reflexive $R$-module but there are only finitely many nonisomorphic indecomposable totally reflexive $R$-modules.
Then $R$ is a simple hypersurface singularity.
\end{cor}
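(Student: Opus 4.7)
The plan is to chain the immediately preceding corollary with classical classification theorems for Gorenstein rings of finite Cohen-Macaulay representation type. The argument is a short three-step reduction: first obtain Gorensteinness of $R$, then translate the hypothesis from totally reflexive modules into a hypothesis on maximal Cohen-Macaulay modules, and finally invoke the classification of such rings.

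First I would apply the contrapositive of the Corollary just proved. The hypothesis supplies a nonfree totally reflexive $R$-module together with only finitely many isomorphism classes of indecomposable totally reflexive $R$-modules; if $R$ were non-Gorenstein, that Corollary would force the existence of infinitely many nonisomorphic indecomposable totally reflexive $R$-modules, which is a contradiction. Hence $R$ must be Gorenstein.

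Next, since $R$ is Gorenstein, Remark \ref{basic}(3) gives $\G(R) = \C(R)$, so the finiteness hypothesis on indecomposable totally reflexive $R$-modules is exactly the statement that $R$ has finite Cohen-Macaulay representation type. At this point the problem reduces to citing the classification: Herzog's Satz 1.2 in \cite{herzog} ensures that a complete Gorenstein local ring of finite CM type is a hypersurface, while Yoshino's Theorems (8.15) and (8.10) in \cite{yos0}, together with Buchweitz-Greuel-Schreyer's Theorem B in \cite{BGS}, identify the complete hypersurfaces of finite CM type precisely with the simple singularities in Arnold's sense.

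The main obstacle is not conceptual but bookkeeping: the cited classification results are formulated for the $\m$-adic completion, so one has to pass to $\widehat R$, apply the theorems there, and descend back to $R$. This descent is unproblematic under the standing hypotheses, since $R$ is henselian and a homomorphic image of a regular local ring, and both the hypersurface property and finite CM representation type are well-behaved under faithfully flat completion in this setting. Once the descent is performed, the conclusion that $R$ itself is a simple hypersurface singularity follows.
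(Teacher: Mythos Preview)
Your proposal is correct and follows essentially the same route as the paper: the paper's own argument is a one-line ``combining this with \cite[Theorems (8.15) and (8.10)]{yos0} (cf.\ \cite[Satz 1.2]{herzog} and \cite[Theorem B]{BGS})'', and you have simply unpacked that line into the three steps (Gorensteinness via the preceding corollary, identification $\G(R)=\C(R)$, then the finite CM type classification). Your added remark about passing to the completion and descending is a reasonable elaboration of a point the paper leaves implicit in the citation.
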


\section{Contravariantly finite resolving subcategories}

In this section, we will give the main theorems of this paper.
The most general result is Theorem \ref{main}, which implies all of the other results obtained in this paper.
First of all, we recall the definition of the syzygies of a given module.
Let $M$ be an $R$-module and $n$ a positive integer.
Let
$$
F_\bullet =(\cdots \overset{d_{n+1}}{\to} F_n \overset{d_n}{\to} F_{n-1} \overset{d_{n-1}}{\to} \cdots \overset{d_2}{\to} F_1 \overset{d_1}{\to} F_0 \to 0)
$$
be a minimal free resolution of $M$.
We define the {\em $n$th syzygy} $\Omega^nM$ of $M$ as the image of the homomorphism $d_n$.
We set $\Omega^0M=M$.
Note that the $n$th syzygy $\Omega^nM$ of $M$ is uniquely determined up to isomorphism, since so is a minimal free resolution of $M$.

Now we recall the definition of a resolving subcategory.

\begin{defn}\label{resolv}
A subcategory $\X$ of $\mod R$ is called {\em resolving} if it satisfies the following four conditions.
\begin{enumerate}[\rm (1)]
\item
$\X$ contains $R$.
\item
$\X$ is closed under direct summands: if $M$ is an $R$-module in $\X$ and $N$ is a direct summand of $M$, then $N$ is also in $\X$.
\item
$\X$ is closed under extensions: for an exact sequence $0 \to L \to M \to N \to 0$ of $R$-modules, if $L$ and $N$ are in $\X$, then $M$ is also in $\X$.
\item
$\X$ is closed under kernels of epimorphisms: for an exact sequence $0 \to L \to M \to N \to 0$ of $R$-modules, if $M$ and $N$ are in $\X$, then $L$ is also in $\X$.
\end{enumerate}
\end{defn}

A resolving subcategory is a subcategory such that any two ``minimal'' resolutions of a module by modules in it have the same length; see \cite[(3.12)]{ABr}.

\begin{rem}\label{resbas}
Let $\X$ be a resolving subcategory of $\mod R$.
Then
\begin{enumerate}[\rm (1)]
\item
$\X$ is closed under finite direct sums: if $M_1,\dots,M_n$ are a finite number of $R$-modules in $\X$, then so is the direct sum $M_1\oplus\cdots\oplus M_n$.
\item
$\X$ is closed under syzygies: if $M$ is an $R$-module in $\X$, then so is $\Omega^nM$ for every $n\ge 0$.
\end{enumerate}
Indeed, as to (1), it is enough to check that if $M$ and $N$ are $R$-modules in $\X$, then so is $M\oplus N$.
This follows by applying Definition \ref{resolv}(3) to the natural split exact sequence $0 \to M \to M\oplus N \to N \to 0$.
As to (2), it suffices to show that if $M$ is an $R$-module in $\X$, then so is the first syzygy $\Omega M$.
There is an exact sequence $0 \to \Omega M \to F \to M \to 0$ with $F$ free, and $F$ is in $\X$ by Definition \ref{resolv}(1) and the first assertion of this remark.
Hence $\Omega M$ is in $\X$ by Definition \ref{resolv}(4).
\end{rem}

\begin{ex}\label{rslvx}
Let $I$ be an ideal of $R$, $K$ an $R$-module, and $n$ a nonnegative integer.
The following subcategories of $\mod R$ are resolving.
\begin{enumerate}[\rm (1)]
\item
$\mod R$.
\item
$\F(R)$.
\item
$\G(R)$.
\item
$\{\,M\mid\grade(I,M)\ge\grade(I,R)\,\}$.
\item
$\C(R)$, provided that $R$ is Cohen-Macaulay.
\item
$\{\,M\mid M\text{ has bounded Betti numbers}\,\}$.
\item
$\{\,M\mid M\text{ has finite complexity}\,\}$.
\item
$\{\,M\mid M\text{ has lower complete intersection dimension zero}\,\}$.
\item
$\{\,M\mid\Tor_i^R(M,K)=0\text{ for }i>n\,\}$.
\item
$\{\,M\mid\Ext_R^i(M,K)=0\text{ for }i>n\,\}$.
\item
$\{\,M\mid\Tor_i^R(M,K)=0\text{ for }i\gg 0\,\}$.
\item
$\{\,M\mid\Ext_R^i(M,K)=0\text{ for }i\gg 0\,\}$.
\end{enumerate}
The fact that the subcategory (3) is resolving follows from \cite[(3.11)]{ABr} or \cite[Lemma 2.3]{AM}.
The resolving property of (5) follows from that of (4).
The fact that (7) and (8) are resolving follows from \cite[Proposition 4.2.4]{Avramov98} and \cite[Lemma 6.3.1]{Avramov02}, respectively.
The resolving properties of the other subcategories can be checked directly.
Note that the subcategories (11) and (12) are thick.
(A subcategory $\X$ of $\mod R$ is called {\em thick} provided that for any exact sequence $0 \to L \to M \to N \to 0$ in $\mod R$, if two of $L,M,N$ are in $\X$, then so is the third.)
\end{ex}

Now we state the most general result in this paper.

\begin{thm}\label{main}
Let $\X$ be a resolving subcategory of $\mod R$ such that the residue field $k$ has a right $\X$-approximation.
Assume that there exists an $R$-module $G\in\X$ of infinite projective dimension such that $\Ext_R^i(G,R)=0$ for $i\gg 0$.
Let $M$ be an $R$-module such that each $X\in\X$ satisfies $\Ext_R^i(X,M)=0$ for $i\gg 0$.
Then $M$ has finite injective dimension.
\end{thm}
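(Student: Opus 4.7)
By Bass's characterisation of finite injective dimension over a local ring, it suffices to show $\Ext_R^i(k,M)=0$ for $i\gg 0$. My plan is to obtain this via the right $\X$-approximation of $k$, Wakamatsu's Lemma, and a careful use of the module $G$.

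First, I replace $G$ by a sufficiently high syzygy so that $\Ext_R^i(G,R)=0$ for all $i>0$; this is permitted because $\X$ is resolving and hence closed under syzygies (Remark~\ref{resbas}(2)), and the dimension shift $\Ext_R^{i+n}(G,R)\cong\Ext_R^i(\Omega^nG,R)$ turns the hypothesis $\Ext_R^i(G,R)=0$ for $i\gg 0$ into vanishing for all $i>0$ after replacement. Infinite projective dimension is preserved.

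Next, let $\phi\colon X\to k$ be a right $\X$-approximation. Since $R\in\X$, any surjection $R\twoheadrightarrow k$ lifts through $\phi$, so $\phi$ is surjective; set $K=\ker\phi$, giving the short exact sequence $0\to K\to X\to k\to 0$. Applying Wakamatsu's Lemma (valid because the resolving subcategory $\X$ is closed under extensions) yields $\Ext_R^1(X',K)=0$ for every $X'\in\X$, and replacing $X'$ by $\Omega^{i-1}X'\in\X$ bootstraps this to $\Ext_R^i(X',K)=0$ for every $X'\in\X$ and every $i\ge 1$. Applying $\Hom_R(-,M)$ to the same short exact sequence, and using $\Ext_R^i(X,M)=0$ for $i\gg 0$ (since $X\in\X$), gives the isomorphism $\Ext_R^i(k,M)\cong\Ext_R^{i-1}(K,M)$ for $i\gg 0$. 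Thus the theorem reduces to showing $\Ext_R^i(K,M)=0$ for $i\gg 0$.

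The main obstacle is this last reduction; the formal Wakamatsu manipulations alone are insensitive to the existence of $G$. The plan is to exploit the dualised minimal free resolution $0\to G^\ast\to F_0^\ast\to F_1^\ast\to\cdots$, which is exact because $\Ext_R^i(G,R)=0$ for $i>0$, together with the strong Ext-vanishings of $K$ against $\X$, to realise $K$ as having finite ``$\X$-dimension'': namely, to construct a finite exact sequence $0\to X_n\to\cdots\to X_0\to K\to 0$ with each $X_j\in\X$. The infinite projective dimension of $G$ is precisely what prevents this construction from collapsing trivially and forces a genuine finite resolution of $K$ by $\X$-objects. Given such a resolution, the usual long-exact-sequence chase propagates the eventual vanishings $\Ext_R^i(X_j,M)=0$ (which hold because $X_j\in\X$) upward to $\Ext_R^i(K,M)=0$ for $i\gg 0$. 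Combining with the earlier reduction gives $\Ext_R^i(k,M)=0$ for $i\gg 0$, whence $\id_R M<\infty$ by Bass.
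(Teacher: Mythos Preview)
Your initial reductions are correct and match the paper: replacing $G$ by a high syzygy, obtaining the exact sequence $0\to K\to X\to k\to 0$ with $K\in\X^\perp$ via Wakamatsu, and reducing the problem to $\Ext_R^i(K,M)=0$ for $i\gg 0$. The gap is in the last paragraph, where the actual work must be done.

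You assert that $K$ admits a finite resolution $0\to X_n\to\cdots\to X_0\to K\to 0$ with each $X_j\in\X$, and that this is forced by the dualised resolution of $G$ together with $\pd_RG=\infty$. No construction is given, and the sentence ``infinite projective dimension \ldots\ forces a genuine finite resolution of $K$ by $\X$-objects'' is not an argument. There is no evident mechanism linking the coresolution $0\to G^\ast\to F_0^\ast\to F_1^\ast\to\cdots$ of $G^\ast$ to a resolution of the unrelated module $K$; nor is it clear that $K$ has finite $\X$-resolution dimension at all under the hypotheses (before one knows anything about $R$). This is the heart of the theorem and cannot be left as a plan.

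What the paper actually does at this point is quite different and more delicate. It does not resolve $K$ by $\X$-objects. Instead, setting $H_i=\tr\Omega(\Omega^iG)$, it shows (via Auslander--Bridger transpose identities and the vanishing $\Ext_R^{>0}(\X,K)=0$) that each short exact sequence $0\to(\Omega^{i+2}G)^\ast\otimes_RK\to(F_{i+2})^\ast\otimes_RK\to(\Omega^{i+3}G)^\ast\otimes_RK\to 0$ is exact and that the right-hand surjection factors through $(F_{i+2})^\ast\otimes_RX$. Applying $D^j=\Ext_R^j(-,M)$, the factoring kills the connecting maps once $j$ is large, producing an infinite chain of surjections among finitely generated $R$-modules $D^{a}(\cdots)\twoheadrightarrow D^{a+1}(\cdots)\twoheadrightarrow\cdots$; Noetherianity forces these to stabilise, which in turn forces $D^{j}((F_{l+2})^\ast\otimes_RK)=0$ for some $j$. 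Since $\pd_RG=\infty$ guarantees $F_{l+2}\ne 0$, this yields $\Ext_R^{j}(K,M)=0$, and the rest follows as you indicated. The role of $\pd_RG=\infty$ is thus not to produce a finite $\X$-resolution of $K$, but to ensure that the free modules $F_{l+2}$ never vanish, so that vanishing of $D^j$ on $(F_{l+2})^\ast\otimes_RK$ really gives vanishing on $K$.
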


Let $\X$ be as in Theorem \ref{main}.
Let $M$ be an $R$-module.
Theorem \ref{main} says that if $\Ext_R^{\gg 0}(X,M)=0$ for each $X\in\X$, then $\Ext_R^{\gg 0}(X,M)=0$ for each $X\in\mod R$.
In this sense, such a subcategory $\X$ of $\mod R$ is ``large''.

We shall prove Theorem \ref{main} in the next section.
In the rest of this section, we will state and prove several results by using Theorem \ref{main}.
We begin with two corollaries which are immediately obtained.

\begin{cor}\label{bestcor}
Let $\X$ be a resolving subcategory of $\mod R$ which is contained in the subcategory $\{\,M\mid\Ext_R^i(M,R)=0\text{ for }i\gg 0\,\}$ of $\mod R$.
Suppose that in $\X$ there is an $R$-module of infinite projective dimension.
If $k$ has a right $\X$-approximation, then $R$ is Gorenstein.
\end{cor}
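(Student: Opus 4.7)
The plan is to apply Theorem \ref{main} directly, taking the target module to be $M=R$ itself. The corollary's hypothesis that $\X$ is contained in the subcategory $\{N\mid\Ext_R^i(N,R)=0\text{ for }i\gg 0\}$ states precisely that every $X\in\X$ satisfies $\Ext_R^i(X,R)=0$ for $i\gg 0$, which is exactly the condition required of the target module in Theorem \ref{main} when one sets $M=R$.

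Next I would verify the remaining hypotheses of Theorem \ref{main}. By assumption, $\X$ contains some $R$-module $G$ of infinite projective dimension; since $\X$ lies inside the above $\Ext$-vanishing subcategory, this same $G$ automatically satisfies $\Ext_R^i(G,R)=0$ for $i\gg 0$. Together with the existence of a right $\X$-approximation of $k$ supplied by hypothesis, every assumption of Theorem \ref{main} is now in place.

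Applying Theorem \ref{main} with $M=R$ yields $\id_R R<\infty$, and a noetherian local ring having finite self-injective dimension is Gorenstein by definition (equivalently, by the classical Bass characterization). Hence $R$ is Gorenstein, which is the desired conclusion.

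The main obstacle is essentially nothing at the level of this corollary: it is constructed as a direct specialization of Theorem \ref{main} to the test module $R$, and all of the real work is absorbed into the proof of Theorem \ref{main} itself, which is deferred to the next section. The only subtlety worth flagging is that the $\Ext$-vanishing subcategory $\{N\mid\Ext_R^i(N,R)=0\text{ for }i\gg 0\}$ is itself resolving by Example \ref{rslvx}(12), so the hypothesis that $\X$ be resolving and contained in it is internally consistent and non-vacuous.
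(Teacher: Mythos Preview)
Your proof is correct and follows essentially the same approach as the paper: both apply Theorem~\ref{main} with the target module $M=R$, noting that the containment hypothesis supplies the required $\Ext$-vanishing both for the distinguished module $G$ and for every $X\in\X$, and then conclude Gorensteinness from $\id_R R<\infty$. Your write-up simply spells out in more detail what the paper compresses into two sentences.
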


\begin{proof}
Each module $X$ in $\X$ satisfies $\Ext_R^i(X,R)=0$ for $i\gg 0$.
Hence Theorem \ref{main} implies that $R$ has finite injective dimension as an $R$-module.
\end{proof}

\begin{cor}\label{maincor}
Let $\X$ be one of the following.
\begin{enumerate}[\rm (1)]
\item
$\G(R)$.
\item
The subcategory $\{\,M\mid\Ext_R^i(M,R)=0\text{ for }i>n\,\}$ of $\mod R$, where $n$ is a nonnegative integer.
\item
The subcategory $\{\,M\mid\Ext_R^i(M,R)=0\text{ for }i\gg 0\,\}$ of $\mod R$.
\end{enumerate}
Suppose that in $\X$ there is an $R$-module of infinite projective dimension.
If $k$ has a right $\X$-approximation, then $R$ is Gorenstein.
\end{cor}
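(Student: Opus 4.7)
The plan is to derive Corollary~\ref{maincor} as a direct consequence of Corollary~\ref{bestcor}, by checking that each of the three candidate subcategories $\X$ satisfies the hypotheses of that corollary. The only non-hypothetical conditions to verify are (a) that $\X$ is resolving and (b) that $\X$ is contained in $\{M\mid \Ext_R^i(M,R)=0 \text{ for } i\gg 0\}$; the remaining two assumptions (existence of an infinite projective dimension module in $\X$, and the existence of a right $\X$-approximation of $k$) are taken directly from the statement of Corollary~\ref{maincor}.

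First I would handle the resolving property. For (1) this is already recorded as Example~\ref{rslvx}(3); for (2) it is the content of Example~\ref{rslvx}(10) applied with $K=R$; and for (3) it is Example~\ref{rslvx}(12) applied with $K=R$. So in each case $\X$ is a resolving subcategory of $\mod R$ without further work.

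Next I would check the containment in $\{M\mid\Ext_R^i(M,R)=0 \text{ for } i\gg 0\}$. For (3) this is the definition. For (2) it is immediate: if $\Ext_R^i(M,R)=0$ for all $i>n$, then in particular it vanishes for $i\gg 0$. For (1) it suffices to recall that by definition every totally reflexive module $M$ satisfies $\Ext_R^i(M,R)=0$ for all $i>0$, so the containment holds (in fact the vanishing is universal, not just eventual).

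Having verified both (a) and (b), and given that by hypothesis $\X$ contains some $R$-module of infinite projective dimension and the residue field $k$ has a right $\X$-approximation, Corollary~\ref{bestcor} applies and yields that $R$ is Gorenstein. There is no real obstacle here; the proof is a bookkeeping exercise that simply specializes Corollary~\ref{bestcor} to the three cases. The substantive content, of course, resides in Theorem~\ref{main} and hence in Corollary~\ref{bestcor}; this corollary is merely the packaging that recovers Corollary~\ref{ccpst} (case (1) with $\X=\G(R)$) together with its two natural Ext-vanishing analogues.
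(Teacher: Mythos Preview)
Your proof is correct and follows essentially the same approach as the paper: verify via Example~\ref{rslvx} that each $\X$ is resolving, note the obvious containment in $\{M\mid\Ext_R^i(M,R)=0\text{ for }i\gg 0\}$, and apply Corollary~\ref{bestcor}. You are simply more explicit about which items of Example~\ref{rslvx} are invoked and why the containments hold, but the argument is the same.
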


\begin{proof}
As we observed in Example \ref{rslvx}, $\X$ is a resolving subcategory of $\mod R$.
Since $\X$ is contained in the subcategory $\{\,M\mid\Ext_R^i(M,R)=0\text{ for }i\gg 0\,\}$, the assertion follows from Corollary \ref{bestcor}.
\end{proof}

\begin{rem}
Corollary \ref{maincor} implies Theorem \ref{cpst}.
Indeed, any nonfree totally reflexive module has infinite projective dimension by \cite[(1.2.10)]{chri}.
\end{rem}

For a subcategory $\X$ of $\mod R$, let $\X^\perp$ (respectively, ${}^\perp\X$) denote the subcategory of $\mod R$ consisting of all $R$-modules $M$ such that $\Ext_R^i(X,M)=0$ (respectively, $\Ext_R^i(M,X)=0$) for all $X\in\X$ and $i>0$.
Applying Wakamatsu's lemma to a resolving subcategory, we obtain the following lemma.
This will often be used in the rest of this paper.

\begin{lem}\label{wak}
Let $\X$ be a resolving subcategory of $\mod R$.
If an $R$-module $M$ has a right $\X$-approximation, then there is an exact sequence $0 \to Y \to X \to M \to 0$ of $R$-modules with $X\in\X$ and $Y\in\X^\perp$.
\end{lem}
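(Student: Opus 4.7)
My plan is to upgrade the given right $\X$-approximation of $M$ to a minimal surjective one and then to invoke a Wakamatsu-style pushout argument for its kernel. First I would refine the hypothesized right $\X$-approximation to a right \emph{minimal} one $\phi\colon X\to M$ with $X\in\X$, using that $R$ is henselian local, so $\mod R$ has the Krull-Schmidt property and superfluous direct summands of the source can be split off. Next, since $\X$ is resolving it contains all free $R$-modules (by Remark \ref{resbas}(1)), so a free cover $F\twoheadrightarrow M$ factors through $\phi$ by the approximation property, whence $\phi$ is surjective. Setting $Y=\ker\phi$ produces the desired short exact sequence $0\to Y\to X\to M\to 0$ with $X\in\X$.

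The remaining task is to show $Y\in\X^\perp$. For the $\Ext^1$ case and a test module $X'\in\X$, I would argue classically: given $\xi\in\Ext_R^1(X',Y)$ represented by $0\to Y\to E\to X'\to 0$, form the pushout along $Y\hookrightarrow X$ to get $0\to X\to F\to X'\to 0$ with $F\in\X$ (closure under extensions). The map $\phi$ extends canonically to $\psi\colon F\to M$ (well-defined because $\phi|_Y=0$), and the approximation property provides $\alpha\colon F\to X$ with $\phi\alpha=\psi$. The endomorphism $\alpha|_X\in\End_R(X)$ satisfies $\phi\alpha|_X=\phi$, so right minimality forces it to be an automorphism; composing with its inverse produces a retraction of $X\hookrightarrow F$, making the pushout sequence split. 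Combined with the long exact sequence associated to $0\to Y\to X\to M\to 0$---whose map $\Hom_R(X',X)\to\Hom_R(X',M)$ is surjective by the approximation property and hence $\Ext_R^1(X',Y)\to\Ext_R^1(X',X)$ is injective---this forces $\xi=0$.

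For $i\geq 2$, I would use dimension shifting. Since $\X$ is resolving, Remark \ref{resbas}(2) gives $\Omega X'\in\X$, and applying $\Hom_R(-,Y)$ to $0\to\Omega X'\to P\to X'\to 0$ with $P$ free yields $\Ext_R^i(X',Y)\cong\Ext_R^{i-1}(\Omega X',Y)$ for $i\geq 2$; induction reduces to the $i=1$ case already handled. The principal obstacle is the $\Ext^1$ step: the pushout argument is delicate and depends essentially on right minimality of $\phi$, which in turn requires the henselian hypothesis; once past this point, the rest of the proof is formal.
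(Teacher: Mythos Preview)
Your proof is correct and follows essentially the same line as the paper's: obtain a \emph{minimal} right $\X$-approximation (using the henselian hypothesis), observe that it is surjective because $R\in\X$, and then show $Y=\Ker\phi$ lies in $\X^\perp$ via Wakamatsu's lemma for $\Ext^1$ together with dimension shifting along syzygies for higher $\Ext$. The only difference is cosmetic: the paper simply cites Wakamatsu's lemma (from \cite{wk} or \cite[Lemma 2.1.1]{Xu}) for the $\Ext^1$ step, whereas your pushout argument is precisely a proof of that lemma.
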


\begin{proof}
Since $R$ is henselian, $M$ has a {\em minimal} right $\X$-approximation: there exists a right $\X$-approximation $\phi:X\to M$ such that every endomorphism $f:X\to X$ with $\phi=\phi f$ is an automorphism; see \cite[Corollary 2.5]{dep0}.
Take an element $m\in M$, and define a homomorphism $\rho:R\to M$ by $\rho(1)=m$.
As $\X$ is a resolving subcategory, $R$ is in $\X$.
The homomorphism $\rho$ factors through $\phi$, and we see that $m=\phi(x)$ for some $x\in X$.
Thus $\phi$ is surjective.
Put $Y=\Ker\phi$.
Wakamatsu's lemma (see \cite{wk} or \cite[Lemma 2.1.1]{Xu}) shows that $\Ext_R^1(X',Y)=0$ for any $X'\in\X$.
We have $\Ext_R^i(X',Y)\cong\Ext_R^1(\Omega^{i-1}X',Y)=0$ for any $X'\in\X$ and $i>0$ by Remark \ref{resbas}(2).
Therefore $Y$ is in $\X^\perp$.
\end{proof}

By using Lemma \ref{wak} and the theorem which was formerly called ``Bass' conjecture'', we obtain another corollary of Theorem \ref{main}.

\begin{cor}\label{sbmncr2}
Let $\X$ be a resolving subcategory of $\mod R$ such that $k$ has a right $\X$-approximation and that $k$ is not in $\X$.
Assume that there is an $R$-module $G\in\X$ with $\pd_RG=\infty$ and $\Ext_R^i(G,R)=0$ for $i\gg 0$.
Then $R$ is Cohen-Macaulay and $\dim R>0$.
\end{cor}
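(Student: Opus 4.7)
The plan is to use Lemma \ref{wak} to convert the $\X$-approximation of $k$ into a module in $\X^\perp$, feed that module into Theorem \ref{main} to secure finite injective dimension, and then invoke the Bass conjecture (Peskine--Szpiro/Hochster/P.~Roberts) and Bass's formula to extract the two conclusions.

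First I would apply Lemma \ref{wak} to $k$, which by hypothesis admits a right $\X$-approximation, obtaining an exact sequence
$$0 \to Y \to X \to k \to 0$$
with $X \in \X$ and $Y \in \X^\perp$. Since $k \notin \X$, we must have $Y \ne 0$: otherwise the surjection $X \to k$ would be an isomorphism, putting $k$ into $\X$.

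Next, because $Y \in \X^\perp$, every $X' \in \X$ satisfies $\Ext_R^i(X',Y)=0$ for all $i>0$, hence certainly for $i\gg 0$. Combined with the standing hypotheses on $\X$, on $k$, and on $G$, this is precisely the input required by Theorem \ref{main} with $M = Y$. Applying the theorem gives $\id_R Y < \infty$. As $Y$ is a nonzero finitely generated $R$-module of finite injective dimension, the Bass conjecture forces $R$ to be Cohen-Macaulay, which is the first conclusion.

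Finally, to rule out $\dim R = 0$, suppose for contradiction that $\dim R=0$, so that $R$ is artinian and $\depth R=0$. Bass's formula then yields $\id_R Y = \depth R = 0$, meaning $Y$ is injective. Consequently the sequence $0\to Y\to X\to k\to 0$ splits, exhibiting $k$ as a direct summand of $X\in\X$. Since $\X$ is resolving and hence closed under direct summands (Definition \ref{resolv}(2)), we conclude $k\in\X$, contradicting the hypothesis $k\notin\X$. Therefore $\dim R>0$.

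The argument is essentially an assembly of prior machinery; the step that requires the most care is ensuring $Y\ne 0$ (without which Bass's conjecture supplies no information) and confirming that the $\X^\perp$-condition genuinely supplies the Ext-vanishing hypothesis of Theorem \ref{main}. The final splitting argument is the one genuinely new twist, and recognizing it is really the only place one has to \emph{use} the assumption that $k$ is not in $\X$ rather than merely not being projective.
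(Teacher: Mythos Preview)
Your proof is correct and follows essentially the same approach as the paper's: apply Lemma \ref{wak} to obtain $0\to Y\to X\to k\to 0$ with $Y\in\X^\perp$ and $Y\ne 0$, invoke Theorem \ref{main} to get $\id_R Y<\infty$, deduce Cohen--Macaulayness from the Bass conjecture, and then use the splitting argument in dimension zero to derive a contradiction with $k\notin\X$. The only cosmetic difference is that the paper cites \cite[Theorem 3.1.17]{BH} for the injectivity of $Y$ when $\dim R=0$, while you phrase it via Bass's formula $\id_R Y=\depth R=0$; these are the same observation.
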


\begin{proof}
According to Lemma \ref{wak}, there is an exact sequence $0 \to Y \to X \to k \to 0$ with $X\in\X$ and $Y\in\X^\perp$.
It is seen from Theorem \ref{main} that the $R$-module $Y$ has finite injective dimension.
Since $k$ is not in $\X$, we have $Y\ne 0$.
By virtue of \cite[Corollary 9.6.2 and Remark 9.6.4(a)]{BH}, the ring $R$ is Cohen-Macaulay.
Assume that $\dim R=0$.
Then $Y$ is injective (cf. \cite[Theorem 3.1.17]{BH}), and so the exact sequence splits.
Hence $k$ is isomorphic to a direct summand of $X$, and $k$ is in $\X$ as $\X$ is closed under direct summands.
This contradiction shows that $R$ has positive Krull dimension.
\end{proof}

To give the next corollary of Theorem \ref{main}, we need the following three lemmas.

\begin{lem}\label{kx}
Let $\X$ be a contravariantly finite resolving subcategory of $\mod R$.
Then, $k\in\X$ if and only if $\X=\mod R$.
\end{lem}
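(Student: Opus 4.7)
The direction $\X=\mod R\Rightarrow k\in\X$ is immediate, so the content lies in the converse. Assume $k\in\X$; I want to show every $M\in\mod R$ belongs to $\X$.

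First I would apply Lemma \ref{wak} to an arbitrary $M\in\mod R$. Since $\X$ is a contravariantly finite resolving subcategory, $M$ has a right $\X$-approximation, and the lemma gives a short exact sequence
\[
0 \to Y \to X \to M \to 0
\]
with $X\in\X$ and $Y\in\X^\perp$. Because $k\in\X$, the defining property of $\X^\perp$ yields $\Ext_R^i(k,Y)=0$ for every $i>0$.

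The key step is then to invoke Bass's characterization of injective dimension over a local noetherian ring: for a finitely generated $R$-module $Y$,
\[
\id_R Y=\sup\{\,i\ge 0\mid\Ext_R^i(k,Y)\ne 0\,\}.
\]
The vanishing above forces $\id_R Y\le 0$, so $Y$ is either zero or injective. In either case $\Ext_R^1(M,Y)=0$, hence the displayed sequence splits and $M$ is a direct summand of $X\in\X$. Closure of $\X$ under direct summands (Definition \ref{resolv}(2)) then gives $M\in\X$, proving $\X=\mod R$.

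The main obstacle, such as it is, is spotting the right tool: one must translate Ext-vanishing against the single module $k$ into injectivity of $Y$ via the Bass formula, because once $Y$ is injective the splitting and the summand-closure of $\X$ finish the argument in a single line. A naive alternative — trying to build up an arbitrary $M$ from $k$ by extensions — would work only in the Artinian case, so routing through the approximation sequence and $\X^\perp$ is essential.
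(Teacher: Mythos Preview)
Your proof is correct and follows essentially the same route as the paper: apply Lemma~\ref{wak} to obtain $0\to Y\to X\to M\to 0$ with $X\in\X$ and $Y\in\X^\perp$, use $k\in\X$ to get $\Ext_R^i(k,Y)=0$ for $i>0$, invoke the Bass characterization of injective dimension (the paper cites \cite[Proposition 3.1.14]{BH}) to conclude $Y$ is injective, split the sequence, and use closure under summands. There is no substantive difference between your argument and the paper's.
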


\begin{proof}
The ``if'' part is clear.
Suppose that $k$ is in $\X$ and let us show that $\X$ coincides with $\mod R$.
Fix an $R$-module $M$.
Then Lemma \ref{wak} yields an exact sequence $0 \to Y \to X \to M \to 0$ of $R$-modules with $X\in\X$ and $Y\in\X^\perp$.
We have $\Ext_R^i(k,Y)=0$ for all $i>0$, as $k\in\X$ and $Y\in\X^\perp$.
By \cite[Proposition 3.1.14]{BH}, $Y$ is an injective $R$-module, and so the exact sequence splits.
Hence $M$ is isomorphic to a direct summand of $X$.
Since $\X$ is closed under direct summands, $M$ is in $\X$.
This completes the proof of the lemma.
\end{proof}

\begin{lem}\label{prpprp}
Let $\X$ be a resolving subcategory of $\mod R$.
Suppose that every $R$-module in ${}^\perp(\X^\perp)$ admits a right $\X$-approximation.
Then $\X={}^\perp(\X^\perp)$.
\end{lem}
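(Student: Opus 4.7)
The plan is to prove the two inclusions $\X \subseteq {}^\perp(\X^\perp)$ and ${}^\perp(\X^\perp) \subseteq \X$ separately. The first inclusion is essentially a matter of unwinding definitions: for any $X\in\X$ and any $Y\in\X^\perp$, we have $\Ext_R^i(X,Y)=0$ for all $i>0$ by the very definition of $\X^\perp$, which is exactly the condition for $X$ to lie in ${}^\perp(\X^\perp)$. No use of the resolving property or of the approximation hypothesis is needed here.

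The reverse inclusion is where the hypothesis does the work. First I would pick an arbitrary $M\in{}^\perp(\X^\perp)$. By the standing assumption, $M$ admits a right $\X$-approximation, so Lemma \ref{wak} supplies a short exact sequence
$$
0 \to Y \to X \to M \to 0
$$
of $R$-modules with $X\in\X$ and $Y\in\X^\perp$. Now I would feed this sequence into the defining property of ${}^\perp(\X^\perp)$: since $M\in{}^\perp(\X^\perp)$ and $Y\in\X^\perp$, we have $\Ext_R^1(M,Y)=0$, so the sequence splits. Consequently $M$ is a direct summand of $X\in\X$, and since $\X$ is resolving (hence closed under direct summands by Definition \ref{resolv}(2)), it follows that $M\in\X$.

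There is no real obstacle here; the argument is essentially a direct application of Lemma \ref{wak} combined with the splitting criterion that $\Ext^1$ vanishes. The only subtlety worth flagging is the logical use of the hypothesis: the assumption guarantees right $\X$-approximations for precisely the modules in ${}^\perp(\X^\perp)$, which is exactly the class we need to feed into Lemma \ref{wak} in order to carry out the argument above. In particular, one does not need $\X$ to be contravariantly finite in $\mod R$ in the full sense.
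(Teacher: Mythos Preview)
Your proof is correct and follows essentially the same route as the paper's own proof: both verify the easy inclusion $\X\subseteq{}^\perp(\X^\perp)$ directly from the definitions, then for the reverse inclusion apply Lemma~\ref{wak} to an arbitrary $M\in{}^\perp(\X^\perp)$, use $\Ext_R^1(M,Y)=0$ to split the resulting short exact sequence, and conclude via closure under direct summands. Your remark that the hypothesis is tailored exactly to feed Lemma~\ref{wak} is also on point.
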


\begin{proof}
It is easy to see that $\X$ is contained in ${}^\perp(\X^\perp)$.
Let $M$ be an $R$-module in ${}^\perp(\X^\perp)$.
Then there is an exact sequence $0 \to Y \to X \to M \to 0$ of $R$-modules with $X\in\X$ and $Y\in\X^\perp$ by Lemma \ref{wak}.
This exact sequence can be regarded as an element of $\Ext_R^1(M,Y)$, and this Ext module vanishes since $M\in {}^\perp(\X^\perp)$ and $Y\in\X^\perp$.
Hence the exact sequence splits, and $M$ is isomorphic to a direct summand of $X$.
Since $\X$ is closed under direct summands, $M$ is in $\X$.
Thus $\X={}^\perp(\X^\perp)$.
\end{proof}

\begin{lem}\label{pdidsup}
Let $M$ and $N$ be nonzero $R$-modules.
Assume either that $M$ has finite projective dimension or that $N$ has finite injective dimension.
Then one has an equality
$$
\sup\{\,i\mid\Ext_R^i(M,N)\ne 0\,\}=\depth R-\depth_RM.
$$
\end{lem}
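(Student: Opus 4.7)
The plan is to treat the two hypotheses separately. In both situations we denote $d=\depth R$ and exploit the fact that the supremum of nonvanishing Ext is controlled by a single well-known invariant: in the first case by the Auslander-Buchsbaum formula $\pd_R M + \depth_R M = d$, and in the second case by the (former) Bass conjecture asserting both that $R$ is Cohen-Macaulay and that $\id_R N = d$ (from which $\depth_R M\le\dim R=d$, so the right-hand side is nonnegative).

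In Case~1, where $\pd_R M=p<\infty$, the Auslander-Buchsbaum formula reduces the claim to $\sup\{i\mid\Ext_R^i(M,N)\ne 0\}=p$. The upper bound is trivial. For the lower bound, a minimal free resolution $F_\bullet\to M$ has differentials with entries in $\m$, so the $p$-th cohomology of $\Hom_R(F_\bullet,N)$ is a quotient of the nonzero finitely generated module $\Hom_R(F_p,N)$ by a submodule of $\m\Hom_R(F_p,N)$; Nakayama's lemma gives $\Ext_R^p(M,N)\ne 0$.

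In Case~2, where $\id_R N<\infty$, we have $\id_R N = d$ so the upper bound $\sup\le d$ is immediate. To sharpen it to $d-\depth_R M$ and simultaneously prove the matching lower bound, I induct on $t:=\depth_R M\in\{0,\ldots,d\}$. For the base case $t=0$, the maximal ideal is associated to $M$, yielding an embedding $k\hookrightarrow M$; the long exact sequence attached to $0\to k\to M\to C\to 0$ contains a surjection $\Ext_R^d(M,N)\twoheadrightarrow\Ext_R^d(k,N)$ whose target is nonzero by Bass's structure theorem for modules of finite injective dimension. For the inductive step $t>0$, choose an $M$-regular $x\in\m$ and examine the long exact sequence of $0\to M\xrightarrow{x}M\to M/xM\to 0$. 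Setting $s=\sup\{i\mid\Ext_R^i(M,N)\ne 0\}$, the long exact sequence identifies
$$\Ext_R^{s+1}(M/xM,N)\cong \Ext_R^s(M,N)/x\Ext_R^s(M,N),$$
which is nonzero by Nakayama, while $\Ext_R^i(M/xM,N)=0$ for $i>s+1$. Hence $\sup\{i\mid\Ext_R^i(M/xM,N)\ne 0\}=s+1$, and combining with the inductive hypothesis $s+1=d-(t-1)$ gives $s=d-t$.

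The main obstacle lies in Case~2: one must know, as input, Bass's nonvanishing $\Ext_R^d(k,N)\ne 0$ for $d=\id_R N$ in order to start the induction, and one must then extract, via Nakayama applied to $\Ext_R^s(M,N)$, the precise identity $s+1=\sup\{i\mid\Ext_R^i(M/xM,N)\ne 0\}$ needed to propagate it. Case~1, by contrast, is essentially immediate from minimality of the resolution together with Nakayama's lemma.
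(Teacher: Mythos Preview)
Your proof is correct. Case~1 is identical to the paper's argument: minimal free resolution, entries of the differentials in $\m$, Nakayama, and the Auslander--Buchsbaum formula. For Case~2 the paper simply cites Ischebeck \cite{ische} and \cite[Exercise~3.1.24]{BH}, whereas you supply the standard induction on $\depth_RM$ that underlies those references; so your treatment is more self-contained but follows the same route as the cited sources.

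One remark: you invoke the (former) Bass conjecture to get that $R$ is Cohen--Macaulay, but this deep result is not actually needed. What you use in the argument is only the equality $\id_RN=\depth R$, which is Bass's 1963 theorem (not the conjecture) and is elementary. The inequality $\depth_RM\le\depth R$ then falls out of the induction itself: your upper-bound step shows $\Ext_R^i(M,N)=0$ for $i>d-t$, while the lower-bound step produces a nonzero $\Ext$ in nonnegative degree, forcing $d-t\ge 0$. So your appeal to the intersection theorem can be dropped, bringing the argument in line with Ischebeck's original 1969 proof.
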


\begin{proof}
Assume that $\pd_RM=s<\infty$.
Then it is easy to see that $\Ext_R^i(M,N)=0$ for $i>s$, so that $\sup\{\,i\mid\Ext_R^i(M,N)\ne 0\,\}\le s$.
There is an exact sequence
$$
0 \to R^{\oplus n_s} \overset{A_s}{\to} R^{\oplus n_{s-1}} \overset{A_{s-1}}{\to} \cdots \overset{A_1}{\to} R^{\oplus n_0} \to M \to 0
$$
such that each $A_i$ is a matrix over $R$ all of whose entries are elements in the maximal ideal $\m$.
Dualizing this by $N$, we obtain an exact sequence $N^{\oplus n_{s-1}} \overset{{}^t(A_s)}{\to} N^{\oplus n_s} \to \Ext_R^s(M,N) \to 0$.
It is seen from Nakayama's lemma that $\Ext_R^s(M,N)\ne 0$.
Thus $\sup\{\,i\mid\Ext_R^i(M,N)\ne 0\,\}=s=\depth R-\depth_RM$, where the last equality follows from the Auslander-Buchsbaum formula.

As to the case where $N$ has finite injective dimension, see \cite[Satz 2.6]{ische} or \cite[Exercise 3.1.24]{BH}.
\end{proof}

The above lemma can also be shown by using the formulas \cite[Theorem 4.1]{Foxby} and \cite[Corollary (2.14)]{Christensen2}.

Now we can show the following corollary.
There are only two contravariantly finite resolving subcategories possessing such $G$ as in the corollary.

\begin{cor}\label{maincr2}
Let $\X$ be a contravariantly finite resolving subcategory of $\mod R$.
Assume that there is an $R$-module $G\in\X$ with $\pd_RG=\infty$ and $\Ext_R^i(G,R)=0$ for $i\gg 0$.
Then either of the following holds.
\begin{enumerate}[\rm (1)]
\item
$\X=\mod R$,
\item
$R$ is Cohen-Macaulay and $\X=\C(R)$.
\end{enumerate}
\end{cor}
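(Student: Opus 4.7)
The plan is to split the argument on whether the residue field $k$ lies in $\X$. By Lemma \ref{kx}, either $k\in\X$---in which case $\X=\mod R$, giving conclusion (1)---or $k\notin\X$. I will assume the latter for the remainder. Then the hypotheses of Corollary \ref{sbmncr2} are all in place ($\X$ is contravariantly finite, so $k$ has a right $\X$-approximation, and $G$ is the required witness), so $R$ is Cohen-Macaulay, with $d:=\dim R>0$. It then remains to identify $\X$ with $\C(R)$.

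The core of the proof is to manufacture a single nonzero test module $Y\in\X^\perp$ of finite injective dimension. Feeding $k$ into Lemma \ref{wak} yields an exact sequence $0\to Y\to X_0\to k\to 0$ with $X_0\in\X$ and $Y\in\X^\perp$; since $\X$ is isomorphism-closed and $k\notin\X$, the surjection $X_0\to k$ cannot be an isomorphism, so $Y\neq 0$. Theorem \ref{main} then forces $\id_R Y<\infty$.

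With this $Y$ in hand, the inclusion $\X\subseteq\C(R)$ drops out immediately: for any nonzero $X\in\X$ we have $\Ext_R^i(X,Y)=0$ for all $i>0$, so Lemma \ref{pdidsup} gives $\depth R-\depth_R X\leq 0$, i.e.\ $X$ is maximal Cohen-Macaulay. For the converse $\C(R)\subseteq\X$, I will invoke Lemma \ref{prpprp}: contravariant finiteness of $\X$ trivially supplies right $\X$-approximations to every module in ${}^\perp(\X^\perp)$, so $\X={}^\perp(\X^\perp)$. Then for $M\in\C(R)$ and any $N\in\X^\perp$, Theorem \ref{main} yields $\id_R N<\infty$, and because $\depth_R M=d=\depth R$, Lemma \ref{pdidsup} forces $\Ext_R^i(M,N)=0$ for every $i>0$; hence $M\in{}^\perp(\X^\perp)=\X$.

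The main conceptual step is the recognition that a single finite-injective-dimension witness $Y$, extracted from the $\X$-approximation of $k$, is already enough to detect the MCM condition on every object of $\X$ via the Ext-depth formula of Lemma \ref{pdidsup}. Once that idea is in place, both containments $\X\subseteq\C(R)\subseteq\X$ become routine bookkeeping between the perpendicular operators and the class of modules of finite injective dimension, and all the lemmas needed have already been established earlier in the section.
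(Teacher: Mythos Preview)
Your proof is correct and follows essentially the same approach as the paper: split on whether $k\in\X$ via Lemma~\ref{kx}, invoke Corollary~\ref{sbmncr2} for Cohen--Macaulayness, produce the nonzero $Y\in\X^\perp$ of finite injective dimension from the $\X$-approximation of $k$, and then use Lemma~\ref{pdidsup} together with Lemma~\ref{prpprp} to establish both inclusions $\X\subseteq\C(R)$ and $\C(R)\subseteq\X$. The only cosmetic differences are that you prove the two inclusions in the opposite order and record $d>0$ explicitly (which is supplied by Corollary~\ref{sbmncr2} but not actually needed here).
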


\begin{proof}
Suppose that $\X\ne\mod R$.
Then Lemma \ref{kx} says that $k$ is not in $\X$.
By Corollary \ref{sbmncr2}, $R$ is Cohen-Macaulay.

First, we show that $\C(R)$ is contained in $\X$.
For this, let $M$ be a maximal Cohen-Macaulay $R$-module.
We have only to prove that $M$ is in ${}^\perp(\X^\perp)$ by Lemma \ref{prpprp}.
Let $N$ be a nonzero $R$-module in $\X^\perp$.
Theorem \ref{main} implies that $N$ is of finite injective dimension.
Since $M$ is maximal Cohen-Macaulay, we have $\sup\{\,i\mid\Ext_R^i(M,N)\ne 0\,\}=0$ by Lemma \ref{pdidsup}.
Therefore $\Ext_R^i(M,N)=0$ for all $N\in\X^\perp$ and $i>0$.
It follows that $M$ is in ${}^\perp(\X^\perp)$, as desired.

Next, we show that $\X$ is contained in $\C(R)$.
We have an exact sequence $0 \to Y \to X \to k \to 0$ with $X\in\X$ and $Y\in\X^\perp$ by Lemma \ref{wak}.
Since $k$ is not in $\X$, the module $Y$ is nonzero.
By Theorem \ref{main}, $Y$ has finite injective dimension.
According to Lemma \ref{pdidsup}, for a nonzero $R$-module $X'$ in $\X$, we have equalities $0\ge\sup\{\,i\mid\Ext_R^i(X',Y)\ne 0\,\}=\depth R-\depth_RX'=\dim R-\depth_RX'$.
Therefore $X'$ is a maximal Cohen-Macaulay $R$-module, as desired.
\end{proof}

Next, we study contravariantly finite resolving subcategories all of whose objects $X$ satisfy $\Ext_R^{\gg 0}(X,R)=0$.
We start by considering special ones among such subcategories.

\begin{prop}\label{pd}
Let $\X$ be a contravariantly finite resolving subcategory of $\mod R$.
Suppose that every $R$-module in $\X$ has finite projective dimension.
Then either of the following holds.
\begin{enumerate}[\rm (1)]
\item
$\X=\F(R)$,
\item
$R$ is regular and $\X=\mod R$.
\end{enumerate}
\end{prop}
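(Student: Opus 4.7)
The plan is to split into two cases according to whether the residue field $k$ lies in $\X$. Since $\X$ is contravariantly finite and resolving, Lemma \ref{kx} equates the condition $k\in\X$ with $\X=\mod R$. So if $k\in\X$, then $k$ has finite projective dimension by the standing hypothesis on $\X$, and the Auslander-Buchsbaum-Serre characterization of regularity forces $R$ to be regular; this yields case (2).

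Now assume $k\notin\X$, aiming for case (1). Since $R\in\X$ and $\X$ is closed under finite direct sums by Remark \ref{resbas}(1), we already have $\F(R)\subseteq\X$, so only the reverse inclusion is at stake. Applying Lemma \ref{wak} to the residue field $k$ (whose right $\X$-approximation exists by contravariant finiteness) produces a short exact sequence $0\to Y\to X\to k\to 0$ with $X\in\X$ and $Y\in\X^\perp$; the assumption $k\notin\X$ forces $Y\ne 0$, since otherwise $X\cong k$ would place $k$ in $\X$.

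Fix any nonzero $M\in\X$. Since $\pd_R M<\infty$ and $Y\ne 0$, Lemma \ref{pdidsup} applies to the pair $(M,Y)$ and gives
$$
\sup\{\,i\mid\Ext_R^i(M,Y)\ne 0\,\}=\depth R-\depth_R M.
$$
On the one hand the supremum is at most $0$ because $Y\in\X^\perp$; on the other hand the right-hand side is at least $0$ by the Auslander-Buchsbaum formula (which applies because $\pd_R M<\infty$). Hence $\depth_R M=\depth R$ and $\pd_R M=0$, so $M$ is free. This proves $\X\subseteq\F(R)$, completing case (1).

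The one conceptual move, and the step that I expect to carry the whole argument, is feeding the module $Y$ from the approximation sequence of $k$ into the Ischebeck-type equality of Lemma \ref{pdidsup}: the built-in vanishing $\Ext_R^{>0}(M,Y)=0$ coming from $Y\in\X^\perp$ collides with that equality to force the projective dimension of every $M\in\X$ to be zero. Everything else is a routine application of the lemmas already available in the excerpt, so I do not anticipate any serious obstacle.
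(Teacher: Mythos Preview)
Your proof is correct and follows essentially the same approach as the paper's. The only cosmetic difference is that the paper splits directly on $\X=\mod R$ versus $\X\ne\mod R$ and, in the latter case, applies Lemma~\ref{wak} to an arbitrary module $M\notin\X$ rather than invoking Lemma~\ref{kx} to single out $k$; after that the two arguments are identical, feeding the resulting nonzero $Y\in\X^\perp$ into Lemma~\ref{pdidsup} to force $\pd_R X'=0$ for every nonzero $X'\in\X$.
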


\begin{proof}
If $\X=\mod R$, then our assumption says that all $R$-modules have finite projective dimension.
Hence $R$ is regular.
Assume that $\X\ne\mod R$.
Then there is an $R$-module $M$ which is not in $\X$.
There is an exact sequence $0 \to Y \to X \to M \to 0$ with $X\in\X$ and $Y\in\X^\perp$ by Lemma \ref{wak}.
Note that $Y\ne 0$ as $M\notin\X$.
Fix a nonzero $R$-module $X'\in\X$.
We have $\Ext_R^i(X',Y)=0$ for all $i>0$, and hence $\pd_RX'=\sup\{\,i\mid\Ext_R^i(X',Y)\ne 0\,\}=0$ by Lemma \ref{pdidsup} and the Auslander-Buchsbaum formula.
Hence $X'$ is free.
This means that $\X$ is contained in $\F(R)$.
On the other hand, $\X$ contains $\F(R)$ since $\X$ is resolving.
Therefore $\X=\F(R)$.
\end{proof}

Combining Proposition \ref{pd} with Corollary \ref{maincr2}, we can get the following.

\begin{cor}\label{216}
Let $\X$ be a contravariantly finite resolving subcategory of $\mod R$.
Suppose that every module $X\in\X$ is such that $\Ext_R^i(X,R)=0$ for $i\gg 0$.
Then one of the following holds.
\begin{enumerate}[\rm (1)]
\item
$\X=\F(R)$,
\item
$R$ is Gorenstein and $\X=\C(R)$,
\item
$R$ is Gorenstein and $\X=\mod R$.
\end{enumerate}
\end{cor}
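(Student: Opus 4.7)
The plan is to case-split on whether $\X$ contains a module of infinite projective dimension, and then invoke the preceding corollaries of Theorem \ref{main} to handle each branch. If every $R$-module in $\X$ has finite projective dimension, Proposition \ref{pd} applies directly: it yields either $\X=\F(R)$ (conclusion (1) of the corollary), or $\X=\mod R$ with $R$ regular. Since regular local rings are Gorenstein, this second alternative is conclusion (3). Thus the first branch of the case-split requires no additional work beyond quoting Proposition \ref{pd}.

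In the complementary case, there is some $G\in\X$ with $\pd_R G=\infty$. The standing hypothesis of the corollary says that $\Ext_R^i(X,R)=0$ for $i\gg 0$ for every $X\in\X$; this is exactly the containment $\X\subset\{M\mid\Ext_R^i(M,R)=0\text{ for }i\gg 0\}$ needed to invoke Corollary \ref{bestcor}, and contravariant finiteness of $\X$ provides the right $\X$-approximation of $k$ that both that corollary and Corollary \ref{maincr2} require. Corollary \ref{bestcor} then gives $R$ Gorenstein. Simultaneously, applying Corollary \ref{maincr2} to the same $G$ yields that $\X$ is either $\mod R$ or $\C(R)$; combined with $R$ being Gorenstein, these are cases (3) and (2) of the corollary respectively (and in the latter case $R$ is automatically Cohen-Macaulay).

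I do not expect any real obstacle here: the proof is essentially a bookkeeping combination of Proposition \ref{pd}, Corollary \ref{bestcor}, and Corollary \ref{maincr2}, which together exhaust the dichotomy. The only item worth a moment's care is the passage from ``$\X$ contains a module of infinite projective dimension'' to the two applications above, but both use exactly the same hypothesis package (the module $G$, plus a right $\X$-approximation of $k$, plus the $\Ext^{\gg 0}(-,R)=0$ containment), so they fire in parallel.
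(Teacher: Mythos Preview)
Your proof is correct and follows essentially the same approach as the paper: both case-split on whether $\X$ contains a module of infinite projective dimension, handling the finite case via Proposition~\ref{pd} and the infinite case via Corollary~\ref{maincr2}. The only minor difference is that in the infinite case the paper derives Gorensteinness directly from the hypothesis (observing that $k\in\X$ when $\X=\mod R$, and that $\Omega^d k\in\X=\C(R)$ otherwise, so $\Ext_R^{\gg 0}(k,R)=0$), whereas you invoke Corollary~\ref{bestcor}; your route is slightly more streamlined but uses the same machinery.
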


\begin{proof}
The corollary follows from Proposition \ref{pd} in the case where all $R$-modules in $\X$ are of finite projective dimension.
So suppose that in $\X$ there exists an $R$-module of infinite projective dimension.
Then Corollary \ref{maincr2} shows that either of the following holds.
\begin{enumerate}[\rm (i)]
\item
$\X=\mod R$,
\item
$R$ is Cohen-Macaulay and $\X=\C(R)$.
\end{enumerate}
By the assumption that every $X\in\X$ satisfies $\Ext_R^i(X,R)=0$ for $i\gg 0$, we have $\Ext_R^i(k,R)=0$ for $i\gg 0$ in the case (i).
In the case (ii), since $\Omega ^dk$ is in $\X$ where $d=\dim R$, we have $\Ext_R^{i+d}(k,R)\cong\Ext_R^i(\Omega^dk,R)=0$ for $i\gg 0$.
Thus, in both cases, the ring $R$ is Gorenstein.
\end{proof}

Finally, we obtain the following result from Corollary \ref{216}, Example \ref{freecov}(2)(3) and Example \ref{rslvx}(2)(5).
It says that the category of finitely generated modules over a Gorenstein local ring possesses only three contravariantly finite resolving subcategories.

\begin{cor}
Let $R$ be a Gorenstein local ring.
Then all the contravariantly finite resolving subcategories of $\mod R$ are $\F(R)$, $\C(R)$ and $\mod R$.
\end{cor}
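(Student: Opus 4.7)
The plan is to deduce this statement as a direct consequence of Corollary \ref{216}, after first verifying that $\F(R)$, $\C(R)$, and $\mod R$ are indeed contravariantly finite resolving subcategories and then checking that the hypothesis of Corollary \ref{216} is automatic for any contravariantly finite resolving subcategory when $R$ is Gorenstein.

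For the forward direction, I would argue that every finitely generated module $X$ over a Gorenstein local ring $R$ satisfies $\Ext_R^i(X,R)=0$ for $i\gg 0$. This is because $R$ is Gorenstein, so $R$ has finite injective dimension (equal to $\dim R$) as a module over itself, and hence $\Ext_R^i(-,R)$ vanishes for all $i>\dim R$. Thus, given any contravariantly finite resolving subcategory $\X$ of $\mod R$, the hypothesis of Corollary \ref{216} is satisfied for free. That corollary then yields the three alternatives: $\X=\F(R)$, or $R$ is Gorenstein and $\X=\C(R)$, or $R$ is Gorenstein and $\X=\mod R$. Since $R$ is already assumed Gorenstein, all three are admissible, and so $\X$ is one of these three subcategories.

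For the reverse direction, I need only check that each of $\F(R)$, $\C(R)$, $\mod R$ is actually contravariantly finite and resolving. The subcategory $\mod R$ is trivially both (the identity map is a right approximation, and the axioms of Definition \ref{resolv} are all obvious). The subcategory $\F(R)$ is resolving by Example \ref{rslvx}(2) and contravariantly finite by Example \ref{freecov}(2). Finally, since $R$ is Gorenstein it is Cohen-Macaulay and admits a dualizing module (namely $R$ itself), so $\C(R)$ is resolving by Example \ref{rslvx}(5) and contravariantly finite by Example \ref{freecov}(3).

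There is no genuine obstacle here: the entire argument rests on the vanishing $\Ext_R^{\gg 0}(-,R)=0$ over a Gorenstein ring, which reduces the problem to an application of Corollary \ref{216}. The corollary has already done the heavy lifting, so this final statement is essentially a bookkeeping consequence of the previously established results.
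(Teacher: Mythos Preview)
Your proposal is correct and follows essentially the same approach as the paper: the paper derives the corollary directly from Corollary~\ref{216} together with Examples~\ref{freecov}(2)(3) and \ref{rslvx}(2)(5), and your argument spells out precisely these ingredients, including the observation that $\Ext_R^{>d}(-,R)=0$ when $R$ is Gorenstein of dimension $d$.
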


\section{Proof of Theorem \ref{main}}

This section is devoted to proving Theorem \ref{main}.
First of all, let us recall the definition and some fundamental properties of the transpose of a given module.
Let $M$ be an $R$-module.
Take a minimal free resolution $F_\bullet =(\cdots \overset{d_2}{\to} F_1 \overset{d_1}{\to} F_0 \to 0)$ of $M$.
We define the {\em transpose} $\tr M$ of $M$ as the cokernel of the $R$-dual homomorphism $d_1^\ast:F_0^\ast \to F_1^\ast$ of $d_1$.

\begin{rem}\label{trem}
Let $M$ be an $R$-module.
\begin{enumerate}[(1)]
\item
The transpose $\tr M$ of $M$ are uniquely determined up to isomorphism, since so is a minimal free resolution of $M$.
\item
One has $\tr(\tr M)\cong M$ and $M^\ast\cong\Omega^2\tr M$ up to free summand.
\end{enumerate}
\end{rem}

For an $R$-module $M$, we denote by $\nu_R(M)$ the minimal number of generators of $M$, i.e. $\nu_R(M)=\dim_k(M\otimes_Rk)$.

\begin{lem}\label{tr}
Let $M$ be an $R$-module.
The transpose $\tr M$ has no nonzero free summand.
\end{lem}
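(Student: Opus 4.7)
The plan is to argue by contradiction, using the evaluation duality $F_1\cong F_1^{\ast\ast}$ to convert a hypothetical free summand of $\tr M$ into a unimodular element of $F_1$ that lies in $\Ker d_1$, which is forbidden by the minimality of the resolution.

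Concretely, I would fix a minimal free resolution $\cdots\to F_2\xrightarrow{d_2}F_1\xrightarrow{d_1}F_0\to M\to 0$, yielding the tautological presentation $F_0^\ast\xrightarrow{d_1^\ast}F_1^\ast\xrightarrow{\pi}\tr M\to 0$. Assume for contradiction that $\tr M$ has a nonzero free summand, so that there is a split epimorphism $p\colon\tr M\to R$; set $\phi=p\circ\pi\in\Hom_R(F_1^\ast,R)$ and, under the natural isomorphism $\Hom_R(F_1^\ast,R)\cong F_1^{\ast\ast}\cong F_1$ given by evaluation, let $x\in F_1$ be the element corresponding to $\phi$.

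I would then verify two observations. First, the surjectivity of $\phi$ forces $x\notin\m F_1$: some $g\in F_1^\ast$ must send $x$ to $1$, which is impossible when every coordinate of $x$ with respect to a basis of $F_1$ lies in $\m$. Second, since $\phi$ kills $\Im d_1^\ast$, for every $h\in F_0^\ast$ we have $h(d_1(x))=\phi(d_1^\ast h)=0$, and letting $h$ range over a basis of $F_0^\ast$ gives $d_1(x)=0$, so $x\in\Ker d_1$. By minimality of $F_\bullet$, $\Ker d_1=\Im d_2\subseteq\m F_1$, placing $x\in\m F_1$ and contradicting the first observation. The only mildly delicate step is the double-dual bookkeeping that translates $\phi\circ d_1^\ast=0$ into $d_1(x)=0$; once that is in hand, the clash with minimality is immediate and the lemma follows.
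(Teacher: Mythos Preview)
Your argument is correct. The double-dual identification $\Hom_R(F_1^\ast,R)\cong F_1$ sends $\phi$ to an element $x$ with $\phi(g)=g(x)$, and then $\phi\circ d_1^\ast=0$ indeed translates into $h(d_1(x))=0$ for all $h\in F_0^\ast$, forcing $d_1(x)=0$. The clash between $x\notin\m F_1$ and $\Ker d_1=\Im d_2\subseteq\m F_1$ is exactly the contradiction you want.

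The paper, however, proceeds differently. Starting from the same surjection $\tr M\to R$, it forms the split exact sequence $0\to F\to F_1^\ast\to R\to 0$, observes that $d_1^\ast$ factors through $F$, and then dualizes the resulting diagram back to recover $M$ as the cokernel of a map $F^\ast\to F_0$. This yields a surjection $F^\ast\twoheadrightarrow\Omega M$, and comparing ranks gives $\rank F_1-1=\rank F\ge\nu_R(\Omega M)=\rank F_1$, a contradiction. Both proofs exploit minimality of $F_\bullet$, but the paper packages it as a numerical rank inequality obtained after dualizing a diagram, whereas you locate a single unimodular element of $\Ker d_1$ directly via evaluation duality. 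Your route is shorter and more element-theoretic; the paper's diagrammatic approach is perhaps more in the spirit of the Auslander--Bridger machinery used elsewhere in the paper, but for this lemma your argument is cleaner.
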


\begin{proof}
Let $F_\bullet = (\cdots \overset{d_2}{\to} F_1 \overset{d_1}{\to} F_0 \to 0)$ be a minimal free resolution of $M$.
Assume that $\tr M$ has a nonzero free summand.
Then there exists a surjective homomorphism $\pi:\tr M\to R$.
We obtain a commutative diagram
$$
\begin{CD}
@. 0 @. 0 \\
@. @VVV @VVV \\
F_0^\ast @>{\delta}>> F @>>> N @>>> 0 \\
@| @V{\theta}VV @VVV \\
F_0^\ast @>{d_1^\ast}>> F_1^\ast @>{\varepsilon}>> \tr M @>>> 0 \\
@. @V{\pi\varepsilon}VV @V{\pi}VV \\
@. R @= R \\
@. @VVV @VVV \\
@. 0 @. 0
\end{CD}
$$
with exact rows and columns, where $F$ is a free $R$-module.
This diagram yields isomorphisms $M=\Coker d_1\cong\Coker(\delta^\ast\theta^\ast)\cong\Coker(\delta^\ast)$, as the homomorphism $\theta^\ast$ is surjective.
Hence there exists a surjective homomorphism $F^\ast\to\Omega M$, and we have $\rank F=\rank F^\ast\ge\nu_R(\Omega M)=\rank F_1=\rank F_1^\ast=\rank F+1$, which is a contradiction.
\end{proof}

For an $R$-module $M$, let $M^\ast M$ be the ideal of $R$ generated by the subset
$$
\{\,f(x)\mid f\in M^\ast,x\in M\,\}
$$
of $R$.
We verify that this ideal has the following property.

\begin{lem}\label{indec}
An $R$-module $M$ has a nonzero free summand if and only if $M^\ast M=R$.
\end{lem}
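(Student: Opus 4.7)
The plan is to prove both directions separately, using the hypothesis (from the conventions section) that $R$ is local with maximal ideal $\m$.

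For the forward direction, suppose $M\cong R\oplus N$ for some $R$-module $N$. Then the canonical projection $\pi\colon M\to R$ lies in $M^\ast$, and the element $e=(1,0)\in M$ satisfies $\pi(e)=1$. Hence $1\in M^\ast M$, so $M^\ast M=R$. This step is routine.

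For the backward direction, assume $M^\ast M=R$. Then $1\in M^\ast M$, so we can write $1=\sum_{i=1}^n f_i(x_i)$ with $f_i\in M^\ast$ and $x_i\in M$. Because $R$ is local, the elements $f_i(x_i)$ cannot all lie in $\m$ (otherwise their sum would also lie in $\m$), so some $f_{i_0}(x_{i_0})$ is a unit $u$ of $R$. Setting $f=u^{-1}f_{i_0}$ and $x=x_{i_0}$, we obtain $f\in M^\ast$ with $f(x)=1$. The homomorphism $f\colon M\to R$ is then surjective, and since $R$ is free (hence projective), the short exact sequence
\[
0\to\Ker f\to M\xrightarrow{f}R\to 0
\]
splits. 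Therefore $M\cong R\oplus\Ker f$, exhibiting a nonzero free summand of $M$.

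Neither direction presents a serious obstacle; the only point worth flagging is the use of locality of $R$ to pass from the equation $1=\sum f_i(x_i)$ to the existence of a single $f_i(x_i)$ that is a unit, since over a non-local ring one would only obtain that the $f_i(x_i)$'s generate the unit ideal and would need to combine the $f_i$'s more carefully. Here, the standing local hypothesis makes the argument immediate.
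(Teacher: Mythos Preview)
Your proof is correct. The forward direction matches the paper's; the backward direction is genuinely different and, in this setting, more elementary.

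The paper's backward argument does not single out one term: from $1=\sum_{i=1}^n f_i(x_i)$ it builds maps $g\colon R\to M^{\oplus n}$, $a\mapsto(ax_1,\dots,ax_n)$, and $f\colon M^{\oplus n}\to R$, $(y_1,\dots,y_n)\mapsto\sum f_i(y_i)$, so that $fg=1_R$. This exhibits $R$ as a direct summand of $M^{\oplus n}$, and the paper then invokes the Krull--Schmidt property (available because $R$ is henselian) to conclude that $R$ is already a summand of $M$. Your observation that locality forces some individual $f_{i_0}(x_{i_0})$ to be a unit bypasses the passage through $M^{\oplus n}$ and Krull--Schmidt entirely; it uses only that $R$ is local, not that $R$ is henselian. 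The paper's route, on the other hand, would survive in contexts where one has Krull--Schmidt but the ``units are detected termwise'' trick is unavailable; over a henselian local ring both work, and yours is shorter.
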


\begin{proof}
If $M$ has a nonzero free summand, then there exists a surjective homomorphism $\pi:M\to R$.
Hence there is an element $x\in M$ such that $\pi(x)=1$.
We have $1=\pi(x)\in M^\ast M$, so $M^\ast M=R$.

Conversely, suppose that the equality $M^\ast M=R$ holds.
Then there are elements $f_1,\dots,f_n\in M^\ast$ and $x_1,\dots,x_n\in M$ such that $1=f_1(x_1)+\cdots+f_n(x_n)$.
Define homomorphisms $f:M^{\oplus n}\to R$ and $g:R\to M^{\oplus n}$ by $f(y_1,\dots,y_n)=f_1(y_1)+\cdots+f_n(y_n)$ for $y_1,\dots,y_n\in M$ and $g(a)=(ax_1,\dots,ax_n)$ for $a\in R$.
Then the composite map $fg$ is the identity map of $R$, so $g$ is a split monomorphism.
Hence $R$ is isomorphic to a direct summand of $M^{\oplus n}$.
Since $R$ is henselian, $R$ is isomorphic to a direct summand of $M$ by virtue of the Krull-Schmidt theorem, that is, $M$ has a nonzero free summand.
\end{proof}

Let $M$ and $N$ be $R$-modules.
We denote by $\P_R(M,N)$ the $R$-submodule of $\Hom_R(M,N)$ consisting of all homomorphisms from $M$ to $N$ which factors through some free $R$-module.
We denote by $\lhom_R(M,N)$ the residue $R$-module $\Hom_R(M,N)/\P_R(M,N)$.

For $R$-modules $M$ and $N$, there is a natural homomorphism
$$
\lambda_{MN}:M\otimes_RN\to\Hom_R(M^\ast,N)
$$
which is given by $\lambda_{MN}(x\otimes y)(f)=f(x)y$ for $x\in M$, $y\in N$ and $f\in M^\ast$.
The following proposition will be used to prove the essential part of the proof of Theorem \ref{main}.

\begin{prop}\label{key}
Let $\X$ be a subcategory of $\mod R$ and $0 \to Y \overset{f}{\to} X \to M \to 0$ an exact sequence of $R$-modules with $X\in\X$ and $Y\in\X^\perp$.
Let $G\in\X$, set $H=\tr\Omega G$, and suppose that $(H^\ast H)M=0$.
Let $0 \to K \overset{g}{\to} F \overset{h}{\to} H \to 0$ be an exact sequence of $R$-modules with $F$ free.
Then the induced sequence
$$
\begin{CD}
0 @>>> K\otimes_RY @>{g\otimes_RY}>> F\otimes_RY @>{h\otimes_RY}>> H\otimes_RY @>>> 0
\end{CD}
$$
is exact, and the map $h\otimes_RY$ factors through the map $F\otimes_Rf:F\otimes_RY\to F\otimes_RX$.
\end{prop}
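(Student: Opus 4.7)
The plan is to derive both parts from the two four-term exact sequences of Auslander and Bridger, valid for any finitely generated $R$-modules $L, N$:
\begin{equation*}
0 \to \Ext_R^1(\tr L, N) \to L \otimes_R N \xrightarrow{\lambda_{L,N}} \Hom_R(L^\ast, N) \to \Ext_R^2(\tr L, N) \to 0,
\end{equation*}
\begin{equation*}
\Tor_2^R(\tr L, N) \to L^\ast \otimes_R N \xrightarrow{\sigma_{L,N}} \Hom_R(L, N) \to \Tor_1^R(\tr L, N) \to 0,
\end{equation*}
where $\sigma_{L,N}(\varphi \otimes n)(m) = \varphi(m)n$. A direct check shows $\Im(\sigma_{L,N}) = \P_R(L, N)$, so the second sequence yields the isomorphism $\Tor_1^R(\tr L, N) \cong \lhom_R(L, N)$.

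For the first assertion it suffices to prove $\Tor_1^R(H, Y) = 0$. The second Auslander--Bridger sequence applied to $L = \Omega G$ gives
\begin{equation*}
\Tor_1^R(H, Y) = \Tor_1^R(\tr \Omega G, Y) \cong \lhom_R(\Omega G, Y).
\end{equation*}
Since $\X$ is resolving, $\Omega G \in \X$ by Remark \ref{resbas}(2), and $\Ext_R^1(G, Y) = 0$ because $Y \in \X^\perp$. Applying $\Hom_R(-, Y)$ to $0 \to \Omega G \to F_0 \to G \to 0$ with $F_0$ free shows that every homomorphism $\Omega G \to Y$ extends to $F_0$, hence factors through a free module. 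Thus $\Hom_R(\Omega G, Y) = \P_R(\Omega G, Y)$, so $\Tor_1^R(H, Y) = 0$, and tensoring $0 \to K \to F \to H \to 0$ with $Y$ produces the desired exact sequence.

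For the factorization, the first Auslander--Bridger sequence with $L = H$, $N = Y$ has both Ext terms vanishing: by Remark \ref{trem}(2), $\tr H \cong \Omega G$ modulo free summands, and $\Ext_R^i(\Omega G, Y) = 0$ for $i \geq 1$ as above. Hence $\lambda_{H, Y}\colon H \otimes_R Y \xrightarrow{\cong} \Hom_R(H^\ast, Y)$ is an isomorphism. On the other hand $\lambda_{H, M} = 0$, since for $x \in H$, $m \in M$, $\varphi \in H^\ast$ one has $\lambda_{H, M}(x \otimes m)(\varphi) = \varphi(x) m \in (H^\ast H) M = 0$. Applying the natural transformation $\lambda_{H, -}$ to $0 \to Y \xrightarrow{f} X \to M \to 0$ yields a commutative diagram with right-exact top row $H \otimes_R Y \to H \otimes_R X \to H \otimes_R M \to 0$ and left-exact bottom row $0 \to \Hom_R(H^\ast, Y) \xrightarrow{f_\ast} \Hom_R(H^\ast, X) \to \Hom_R(H^\ast, M)$. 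Since $\lambda_{H, M} = 0$, the image of $\lambda_{H, X}$ lies in $f_\ast \Hom_R(H^\ast, Y)$, so $\lambda_{H, X}$ lifts uniquely to a map $\alpha\colon H \otimes_R X \to \Hom_R(H^\ast, Y)$. Put $\beta = \lambda_{H, Y}^{-1} \circ \alpha$; then naturality together with the injectivity of $f_\ast$ forces $\beta \circ (H \otimes_R f) = \mathrm{id}_{H \otimes_R Y}$. The naturality square of $h\colon F \to H$ gives $(H \otimes_R f)(h \otimes_R Y) = (h \otimes_R X)(F \otimes_R f)$; applying $\beta$ to both sides delivers $h \otimes_R Y = (\beta \circ (h \otimes_R X)) \circ (F \otimes_R f)$, the required factorization.

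The hypothesis $(H^\ast H) M = 0$ enters exactly once, forcing $\lambda_{H, M} = 0$; the hypothesis $Y \in \X^\perp$ provides both the Ext-vanishing responsible for $\lambda_{H, Y}$ being an isomorphism and the Tor-vanishing responsible for exactness in part (1). The principal subtlety is the bookkeeping needed to shuttle between the Ext- and Tor-forms of the Auslander--Bridger sequences via the transpose; no single step is hard in itself.
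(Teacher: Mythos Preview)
Your argument is essentially the same as the paper's: both use the Auslander--Bridger four-term sequence to see that $\lambda_{H,Y}$ is an isomorphism and $\lambda_{H,M}=0$, produce a retraction of $H\otimes_R f$, and deduce the factorization; and both identify $\Tor_1^R(H,Y)$ with $\lhom_R(\Omega G,Y)$ and kill it using $\Ext_R^1(G,Y)=0$. One slip: the proposition does \emph{not} assume $\X$ is resolving, so you cannot invoke Remark~\ref{resbas}(2) to place $\Omega G$ in $\X$; instead use dimension shifting $\Ext_R^i(\Omega G,Y)\cong\Ext_R^{i+1}(G,Y)=0$ for $i\ge 1$, which follows directly from $G\in\X$ and $Y\in\X^\perp$ (this is exactly what the paper does).
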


\begin{proof}
According to \cite[Proposition (2.6)]{ABr}, there is a commutative diagram
$$
\begin{CD}
@. 0 \\
@. @VVV \\
@. \Ext_R^1(\tr H,Y) \\
@. @VVV \\
@. H\otimes_RY @>{\delta}>> H\otimes_RX @>{\varepsilon}>> H\otimes_RM @>>> 0 \\
@. @V{\alpha}VV @V{\beta}VV @V{\gamma}VV \\
0 @>>> \Hom_R(H^\ast,Y) @>{\zeta}>> \Hom_R(H^\ast,X) @>{\eta}>> \Hom_R(H^\ast,M) @>>> \Ext_R^1(H^\ast,Y) \\
@. @VVV \\
@. \Ext_R^2(\tr H,Y) \\
@. @VVV \\
@. 0
\end{CD}
$$
with exact rows and columns, where $\alpha=\lambda_{HY}$, $\beta=\lambda_{HX}$ and $\gamma=\lambda_{HM}$.
Note from Remark \ref{trem}(2) that $\tr H\cong \Omega G$ and $H^\ast\cong\Omega ^2\tr H\cong\Omega ^3G$ up to free summand.
Since $G\in\X$ and $Y\in\X^\perp$, we have $\Ext_R^1(\tr H,Y)\cong\Ext_R^2(G,Y)=0$, $\Ext_R^2(\tr H,Y)\cong\Ext_R^3(G,Y)=0$ and $\Ext_R^1(H^\ast,Y)\cong\Ext_R^4(G,Y)=0$.
Hence $\alpha$ is an isomorphism and $\eta$ is a surjective homomorphism.
Diagram chasing shows that $\delta$ is an injective homomorphism.
From the assumption that $(H^\ast H)M=0$, we see that $\gamma=\lambda_{HM}$ is the zero map.
We thus obtain the following commutative diagram with exact rows.
$$
\begin{CD}
0 @>>> H\otimes_RY @>{\delta}>> H\otimes_RX @>{\varepsilon}>> H\otimes_RM @>>> 0 \\
@. @V{\alpha}V{\cong}V @V{\beta}VV @V{\gamma}V{0}V \\
0 @>>> \Hom_R(H^\ast,Y) @>{\zeta}>> \Hom_R(H^\ast,X) @>{\eta}>> \Hom_R(H^\ast,M) @>>> 0
\end{CD}
$$
As $\eta\beta=\gamma\varepsilon=0$, there exists a homomorphism
$$
\sigma: H\otimes_RX \to \Hom_R(H^\ast,Y)
$$
with $\zeta\sigma=\beta$.
The commutativity $\zeta\alpha=\beta\delta$ and the injectivity of $\zeta$ imply that $\sigma\delta=\alpha$.
Put
$$
\tau=\alpha^{-1}\sigma:H\otimes_RX \to H\otimes_RY.
$$
The map $\delta$ is a split monomorphism with a splitting map $\tau$.
There is a commutative diagram
$$
\begin{CD}
@. 0 \\
@. @VVV \\
@. \Tor_1^R(H,Y) \\
@. @VVV \\
@. K\otimes_RY @>>> K\otimes_RX @>>> K\otimes_RM @>>> 0 \\
@. @V{g\otimes_RY}VV @VVV @VVV \\
0 @>>> F\otimes_RY @>{F\otimes_Rf}>> F\otimes_RX @>>> F\otimes_RM @>>> 0 \\
@. @V{h\otimes_RY}VV @V{\xi}VV @VVV \\
0 @>>> H\otimes_RY @>{\delta}>> H\otimes_RX @>{\varepsilon}>> H\otimes_RM @>>> 0 \\
@. @VVV @VVV @VVV \\
@. 0 @. 0 @. 0
\end{CD}
$$
with exact rows and columns, and we have $h\otimes_RY=(\tau\delta)(h\otimes_RY)=\tau\xi(F\otimes_Rf)$.
Thus, the homomorphism $h\otimes_RY$ factors through the homomorphism $F\otimes_Rf$.

We have an isomorphism $\Tor_1^R(H,Y)=\Tor_1^R(\tr\Omega G,Y)\cong\lhom_R(\Omega G,Y)$ by \cite[Lemma (3.9)]{yos0}.
It remains to prove that $\lhom_R(\Omega G,Y)=0$.
There is an exact sequence $0 \to \Omega G \overset{\theta}{\to} P \to G \to 0$ of $R$-modules such that $P$ is a free $R$-module.
Let $\rho\in\Hom_R(\Omega G,Y)$.
We want to show that this map factors through some free $R$-module.
There is a pushout diagram:
$$
\begin{CD}
0 @>>> \Omega G @>{\theta}>> P @>>> G @>>> 0 \\
@. @V{\rho}VV @V{\phi}VV @| \\
0 @>>> Y @>{\psi}>> Q @>>> G @>>> 0
\end{CD}
$$
The second row is a split exact sequence since it can be regarded as an element of $\Ext_R^1(G,Y)$, which vanishes as $G\in\X$ and $Y\in\X^\perp$.
Hence there is a homomorphism $\pi:Q\to Y$ such that $\pi\psi=1$, and we have $\rho=(\pi\psi)\rho=\pi\phi\theta$.
Therefore $\rho$ factors through the free $R$-module $P$, as desired.
\end{proof}

The proof of our next result will require the following elementary lemma.

\begin{lem}\label{elmntry}
Let $M$ be an $R$-module, and let
$$
M=M_0 \overset{f_0}{\to} M_1 \overset{f_1}{\to} M_2 \overset{f_2}{\to} \cdots
$$
be a sequence of surjective homomorphisms.
Then $f_i$ is an isomorphism for $i\gg 0$.
\end{lem}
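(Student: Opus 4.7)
The plan is to convert the sequence into an ascending chain of submodules of a single finitely generated module, so that Noetherianness kicks in.

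First I would form the cumulative composites $g_0 = \mathrm{id}_M$ and $g_i = f_{i-1}\circ f_{i-2}\circ\cdots\circ f_0 \colon M \to M_i$ for $i\ge 1$. Each $g_i$ is surjective as a composition of surjections, and they satisfy $g_{i+1} = f_i\circ g_i$. Setting $K_i = \Ker g_i$, the relation $g_{i+1} = f_i\circ g_i$ gives $K_i \subseteq K_{i+1}$, so $(K_i)_{i\ge 0}$ is an ascending chain of submodules of $M$.

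Since $R$ is noetherian and $M$ is finitely generated, $M$ is a noetherian $R$-module, hence the chain stabilizes: there is some $N$ with $K_i = K_N$ for all $i\ge N$. I would then extract the desired conclusion from this stabilization as follows: for $i \ge N$, the equality $K_{i+1} = K_i$ says $g_i^{-1}(\Ker f_i) = g_i^{-1}(0)$; applying the surjection $g_i$ to both sides yields $\Ker f_i = 0$. Combined with the surjectivity of $f_i$, this shows $f_i$ is an isomorphism for all $i\ge N$.

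The argument is essentially routine, and I expect no genuine obstacle; the only point that requires a moment's care is the conclusion $\Ker f_i = 0$ from $K_{i+1} = K_i$, where surjectivity of $g_i$ is indispensable (without it, equality of inverse images would not descend to equality of the sets themselves).
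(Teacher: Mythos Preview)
Your proof is correct and follows essentially the same approach as the paper: form the composites $g_i$, consider the ascending chain $K_i=\Ker g_i$ inside the noetherian module $M$, and deduce that $f_i$ is an isomorphism once the chain stabilizes. The only cosmetic difference is that the paper phrases the final step via the commutative square identifying $M_i\cong M/K_i$ and $M_{i+1}\cong M/K_{i+1}$, whereas you argue directly that $g_i^{-1}(\Ker f_i)=g_i^{-1}(0)$ forces $\Ker f_i=0$ by surjectivity of $g_i$; these are equivalent.
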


\begin{proof}
For a positive integer $i$, let $K_i$ be the kernel of the composite map $g_i:=f_{i-1}f_{i-2}\cdots f_1f_0$.
Then there is an ascending chain $K_1\subseteq K_2\subseteq K_3\subseteq\cdots$ of submodules of $M$.
Since $M$ is a noetherian $R$-module, there exists an integer $n$ such that $K_i=K_{i+1}$ for any $i\ge n$.
For each positive integer $i$, we have a commutative diagram
$$
\begin{CD}
M/K_i @>{\cong}>> M_i \\
@V{\pi_i}VV @V{f_i}VV \\
M/K_{i+1} @>{\cong}>> M_{i+1}
\end{CD}
$$
where the horizontal isomorphisms are induced by $g_i$ and $g_{i+1}$, and $\pi$ is a natural surjective homomorphism.
Since $\pi_i$ is an isomorphism for $i\ge n$, the map $f_i$ is an isomorphism for $i\ge n$.
\end{proof}

Let $\A$ and $\B$ be abelian categories.
A {\em contravariant cohomological $\delta$-functor} $D$ from $\A$ to $\B$ is defined as a collection of additive contravariant functors $D^n:\A\to\B$ for each integer $n\ge 0$, together with morphisms $\delta^n(s):D^n(X)\to D^{n+1}(Z)$ for each short exact sequence $s:0 \to X \overset{f}{\to} Y \overset{g}{\to} Z \to 0$ in $\A$.
We make the convention that $D^n=0$ for $n<0$.
The following two conditions are imposed:
\begin{enumerate}[\rm (1)]
\item
For each short exact sequence as above, there is a long exact sequence
$$
\cdots \to D^{n-1}(X) \overset{\delta^{n-1}(s)}{\to} D^n(Z) \overset{D^n(g)}{\to} D^n(Y) \overset{D^n(f)}{\to} D^n(X) \overset{\delta^n(s)}{\to} D^{n+1}(Z) \to \cdots.
$$
\item
For each commutative diagram
$$
\begin{CD}
s\phantom{'}:\quad 0 @>>> X @>{f}>> Y @>{g}>> Z @>>> 0 \\
@. @V{\alpha}VV @V{\beta}VV @V{\gamma}VV \\
s':\quad 0 @>>> X' @>{f'}>> Y' @>{g'}>> Z' @>>> 0
\end{CD}
$$
with exact rows, the diagram
$$
\begin{CD}
D^n(X') @>{\delta^n(s')}>> D^{n+1}(Z') \\
@V{D^n(\alpha)}VV @V{D^{n+1}(\gamma)}VV \\
D^n(X) @>{\delta^n(s)}>> D^{n+1}(Z)
\end{CD}
$$
is commutative.
\end{enumerate}

Now we can prove the following, which will play a key role in the proof of Theorem \ref{main}.

\begin{prop}\label{keyprop}
Let $\X$ be a subcategory of $\mod R$ which is closed under syzygies.
Let $0 \to Y \to X \to M \to 0$ be an exact sequence of $R$-modules with $X\in\X$ and $Y\in\X^\perp$.
Suppose that there is an $R$-module $G\in\X$ with $\pd_RG=\infty$ and $\Ext_R^i(G,R)=0$ for $i\gg 0$.
Put $H_i=\tr\Omega(\Omega^iG)$ and assume that $((H_i)^\ast H_i)M=0$ for $i\gg 0$.
Let $D=(D^j)_{j\ge 0}:\mod R\to\mod R$ be a contravariant cohomological $\delta$-functor.
If $D^j(X)=0$ for $j\gg 0$, then $D^j(Y)=D^j(M)=0$ for $j\gg 0$.
\end{prop}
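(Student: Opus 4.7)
The plan is to first reduce to proving $D^j(Y)=0$ for $j\gg 0$, and then to apply Proposition \ref{key} iteratively to produce a chain of surjections that Lemma \ref{elmntry} forces to stabilize, making the relevant kernels (themselves powers of $D^j(Y)$) vanish. For the reduction, I apply the $\delta$-functor to $0\to Y\to X\to M\to 0$: the hypothesis $D^j(X)=0$ for $j\gg 0$ makes the connecting homomorphism $\delta\colon D^n(Y)\to D^{n+1}(M)$ an isomorphism for $n\gg 0$, so $D^n(Y)=0$ for $n\gg 0$ will imply $D^n(M)=0$ for $n\gg 0$.

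For each sufficiently large $i$, I invoke Proposition \ref{key} with $G$ replaced by $\Omega^iG$ (which lies in $\X$ since $\X$ is closed under syzygies, and automatically satisfies the needed $\Ext$-vanishing since $\Ext_R^j(G,R)=0$ for $j\gg 0$). Writing $0\to K_i\to F_i\to H_i\to 0$ for the minimal presentation of $H_i$, I verify that $K_i=\Omega H_i\cong H_{i-1}$ for $i\gg 0$: dualizing the minimal free resolution of $\Omega^iG$ and using that $\Ext_R^{\geq 1}(\Omega^iG,R)=0$ for $i\gg 0$ yields an exact sequence $0\to H_{i-1}\to F_2^{\ast}\to H_i\to 0$, and the Betti-number identity $\rank(F_2^{\ast})=\nu_R(H_i)$ shows that $F_2^{\ast}\to H_i$ is in fact the minimal free cover. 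The factorization of $h_i\otimes_R Y$ through $F_i\otimes_R f$ from Proposition \ref{key} then forces $D^n(h_i\otimes_R Y)$ to factor through $D^n(F_i\otimes_R X)=D^n(X)^{r_i}$, which vanishes for $n\gg 0$. Consequently the long exact sequence of $D^\bullet$ attached to the short exact sequence from Proposition \ref{key} collapses, for $n\gg 0$ and $i\gg 0$, to
$$0\to D^n(Y)^{r_i}\to D^n(H_{i-1}\otimes_R Y)\to D^{n+1}(H_i\otimes_R Y)\to 0,$$
where $r_i=\nu_R(H_i)\geq 1$ (nonzero because $\pd_RG=\infty$ forces every Betti number of $G$ to be positive).

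Iterating $(n,i)\mapsto(n+1,i+1)$ yields a chain of surjective homomorphisms
$$D^{n_0}(H_{i_0-1}\otimes_R Y)\twoheadrightarrow D^{n_0+1}(H_{i_0}\otimes_R Y)\twoheadrightarrow D^{n_0+2}(H_{i_0+1}\otimes_R Y)\twoheadrightarrow\cdots$$
between finitely generated $R$-modules. By Lemma \ref{elmntry} the maps are isomorphisms from some index on, so the kernels $D^{n_0+j}(Y)^{r_{i_0+j}}$ vanish for $j\gg 0$; since $r_{i_0+j}\geq 1$, I conclude $D^{n_0+j}(Y)=0$ for $j\gg 0$, and then the reduction step gives $D^j(M)=0$ for $j\gg 0$ as well. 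The main obstacle is the clean identification $\Omega H_i\cong H_{i-1}$; without it the iterated short exact sequences would carry superfluous free summands and the chain would not splice together neatly enough for Lemma \ref{elmntry} to apply directly.
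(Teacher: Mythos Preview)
Your argument is correct and follows essentially the same route as the paper's proof: the paper likewise replaces $G$ by a high syzygy, identifies $H_i\cong(\Omega^{i+3}G)^\ast$ and $\Omega H_i\cong(\Omega^{i+2}G)^\ast$ (your relation $\Omega H_i\cong H_{i-1}$ is exactly this in different notation), uses Proposition~\ref{key} to obtain the same short exact sequences with the map $h\otimes_RY$ factoring through a power of $X$, and then applies Lemma~\ref{elmntry} to the resulting chain of surjections to kill $D^j(Y)$ and hence $D^j(M)$. The only cosmetic difference is that the paper works throughout with the dualized minimal free resolution of $G$ rather than rephrasing things via $H_i$ and $H_{i-1}$.
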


\begin{proof}
Since $\X$ is closed under syzygies, the module $\Omega^iG$ is in $\X$ for $i\ge 0$.
Replacing $G$ with $\Omega^iG$ for $i\gg 0$, we may assume that $\Ext_R^i(G,R)=0$ for all $i>0$ and that $((H_i)^\ast H_i)M=0$ for all $i\ge 0$.
Let 
$$
F_\bullet=(\cdots \overset{d_{i+1}}{\to} F_i \overset{d_i}{\to} F_{i-1} \overset{d_{i-1}}{\to} \cdots \overset{d_2}{\to} F_1 \overset{d_1}{\to} F_0 \to 0)
$$
be a minimal free resolution of $G$.
Dualizing this by $R$ yields an exact sequence
$$
0 \to G^\ast \to (F_0)^\ast \overset{(d_1)^\ast}{\to} (F_1)^\ast \overset{(d_2)^\ast}{\to} \cdots \overset{(d_i)^\ast}{\to} (F_i)^\ast \overset{(d_{i+1})^\ast}{\to} (F_{i+1})^\ast \overset{(d_{i+2})^\ast}{\to} (F_{i+2})^\ast \overset{(d_{i+3})^\ast}{\to} \cdots,
$$
and it is easily seen that $H_i\cong(\Omega^{i+3}G)^\ast$ and $\Omega H_i\cong(\Omega^{i+2}G)^\ast$ for $i\ge 0$.
By Proposition \ref{key}, for each integer $i\ge 0$ we have an exact sequence
$$
0 \to (\Omega^{i+2}G)^\ast\otimes_RY \to (F_{i+2})^\ast\otimes_RY \overset{f_i}{\to} (\Omega^{i+3}G)^\ast\otimes_RY \to 0
$$
such that $f_i$ factors through $(F_{i+2})^\ast\otimes_RX$.

There is an integer $a\ge 0$ such that $D^j(X)=0$ for all $j\ge a$.
From the above short exact sequence, we get a long exact sequence
$$
\cdots \overset{D^j(f_i)}{\to} D^j((F_{i+2})^\ast\otimes_RY) \to D^j((\Omega^{i+2}G)^\ast\otimes_RY) \to D^{j+1}((\Omega^{i+3}G)^\ast\otimes_RY) \overset{D^{j+1}(f_i)}{\to} \cdots.
$$
The homomorphism $D^j(f_i)$ factors through $D^j((F_{i+2})^\ast\otimes_RX)$, which vanishes for $j\ge a$ as $(F_{i+2})^\ast\otimes_RX$ is isomorphic to a direct sum of copies of $X$.
Hence $D^j(f_i)=0$ for $j\ge a$, and we obtain an exact sequence
$$
0 \to D^j((F_{i+2})^\ast\otimes_RY) \to D^j((\Omega^{i+2}G)^\ast\otimes_RY) \overset{\varepsilon_{i,j}}{\to} D^{j+1}((\Omega^{i+3}G)^\ast\otimes_RY) \to 0
$$
for $i\ge 0$ and $j\ge a$.
Thus, there is a sequence
$$
D^a((\Omega^2G)^\ast\otimes_RY) \overset{\varepsilon_{0,a}}{\to} D^{a+1}((\Omega^3G)^\ast\otimes_RY) \overset{\varepsilon_{1,a+1}}{\to} D^{a+2}((\Omega^4G)^\ast\otimes_RY) \overset{\varepsilon_{2,a+2}}{\to} \cdots
$$
of surjective homomorphisms of $R$-modules.
Lemma \ref{elmntry} says that there exists an integer $b\ge 0$ such that
$$
\varepsilon_{l,a+l}:D^{a+l}((\Omega^{l+2}G)^\ast\otimes_RY) \to D^{a+l+1}((\Omega^{l+3}G)^\ast\otimes_RY)
$$
is an isomorphism for every $l\ge b$.
It follows from the above short exact sequence that $D^{a+l}((F_{l+2})^\ast\otimes_RY)=0$ for every $l\ge b$.
Since $G$ has infinite projective dimension, each $F_i$ is a nonzero free $R$-module, and the module $(F_{l+2})^\ast\otimes_RY$ is isomorphic to a nonzero direct sum of copies of $Y$.
Thus we have $D^j(Y)=0$ for $j\ge a+b$.
From the exact sequence $0 \to Y \to X \to M \to 0$ we see that $D^j(M)=0$ for $j\ge a+b+1$.
\end{proof}

Now we are in the position to achieve the purpose of this section.

\begin{tpf}
Since $k$ admits a right $\X$-approximation, there exists an exact sequence $0 \to Y \to X \to k \to 0$ of $R$-modules with $X\in\X$ and $Y\in\X^\perp$ by Lemma \ref{wak}.
For an integer $i\ge 0$, put $H_i=\tr\Omega(\Omega^iG)$.
Lemma \ref{tr} says that $H_i$ has no nonzero free summand.
We have $(H_i)^\ast H_i\ne R$ by Lemma \ref{indec}.
Hence $((H_i)^\ast H_i)k=0$ for $i\ge 0$.
Applying Proposition \ref{keyprop} to the contravariant cohomological $\delta$-functor $D=(\Ext_R^j(\quad,M))_{j\ge 0}$, we obtain $D^j(k)=0$ for $j\gg 0$.
Namely, we have $\Ext_R^j(k,M)=0$ for $j\gg 0$, which implies that $M$ has finite injective dimension.
\qed
\end{tpf}

\section*{Acknowledgments}

The author would like to give his deep gratitude to Shiro Goto and Osamu Iyama for a lot of valuable discussions and helpful suggestions.
The author also thanks Mitsuyasu Hashimoto, Yuji Yoshino and an anonymous referee for important and useful comments.


\end{document}